\numberwithin{equation}{section}
\def\bb#1\eb{\textcolor{blue}
{#1}} %
\def\br#1\er{\textcolor{red}
{#1}} %
\def\bv#1\ev{\textcolor{green}
{#1}} %
\def\bc#1\ec{\textcolor{cyan}
{#1}} %
\def\Xint#1{\mathchoice
  {\XXint\displaystyle\textstyle{#1}}%
  {\XXint\textstyle\scriptstyle{#1}}%
  {\XXint\scriptstyle\scriptscriptstyle{#1}}%
  {\XXint\scriptscriptstyle\scriptscriptstyle{#1}}%
  \!\int}
\def\XXint#1#2#3{{\setbox0=\hbox{$#1{#2#3}{\int}$}
  \vcenter{\hbox{$#2#3$}}\kern-.5\wd0}}
\def\-int{\Xint -}
\newcommand{\e}{\varepsilon}
\newcommand{\R}{\mathbb{R}}
\newcommand{\N}{\mathbb{N}}
\DeclareMathOperator{\dive}{div}
\DeclareMathOperator{\supp}{supp}
\newtheorem{prop}{Proposition}[section]
\newtheorem{lem}{Lemma}[section]
\newtheorem{thm}{Theorem}[section]
\newtheorem{remark}{Remark}[section]
\begin{document}
\title[Fractional Schr\"odinger equation]{Multiplicity of positive solutions for a class of fractional Schr\"odinger equations via penalization method}

\author[V. Ambrosio]{Vincenzo Ambrosio}
\address{Dipartimento di Scienze Pure e Applicate (DiSPeA),
Universit\`a degli Studi di Urbino 'Carlo Bo'
Piazza della Repubblica, 13
61029 Urbino (Pesaro e Urbino, Italy)}
\email{vincenzo.ambrosio2@unina.it}

\keywords{Fractional Laplacian; penalization method; multiplicity of solutions; Nehari manifold; supercritical problems}
\subjclass[2010]{35A15, 35J60, 35R11, 45G05}

\date{}

\date{}

\begin{abstract}
By using the penalization method and the Ljusternik-Schnirelmann theory, we investigate the multiplicity of positive solutions of the following fractional Schr\"odinger equation
$$
\e^{2s}(-\Delta)^{s} u + V(x)u = f(u) \mbox{ in } \R^{N}
$$
where $\varepsilon>0$ is a parameter, $s\in (0, 1)$, $N>2s$, $(-\Delta)^{s}$ is the fractional Laplacian, $V$ is a positive continuous potential with local minimum, and $f$ is a superlinear function with subcritical growth.
We also obtain a multiplicity result when $f(u)=|u|^{q-2}u+\lambda |u|^{r-2}u$ with $2<q<2^{*}_{s}\leq r$ and $\lambda>0$, 
by combining a truncation argument and a Moser-type iteration.
\end{abstract}

\maketitle

\section{Introduction}

\noindent

In this paper we deal with the following nonlinear fractional Schr\"odinger equation
\begin{equation}\label{P}
\e^{2s}(-\Delta)^{s} u + V(x)u =  f(u) \mbox{ in } \R^{N} 
\end{equation}
where $\e>0$ is a parameter, $s\in (0,1)$ and $N>2 s$. Here, the potential $V: \R^{N}\rightarrow \R$ is continuous and verifies the following hypotheses:
\begin{compactenum}[$(V_1)$]
\item $\inf_{x\in \R^{N}} V(x)=V_{0}>0$;
\item there exists a bounded set $\Lambda\subset \R^{N}$ such that
$$
V_{0}<\min_{x\in \partial \Lambda} V(x).
$$
\end{compactenum}
Concerning the nonlinearity $f: \R\rightarrow \R$, we assume that $f(t)=0$ for $t<0$, and that it is a  continuous function satisfying the following conditions: 
\begin{compactenum}[($f_1$)]
\item $\displaystyle{\lim_{t\rightarrow 0} \frac{f(t)}{t}=0}$;
\item there exists $q\in (2, 2^{*}_{s})$, where $2^{*}_{s}=\frac{2N}{N-2s}$, such that $\displaystyle{\lim_{t\rightarrow \infty} \frac{f(t)}{t^{q-1}}=0}$;
\item there exists $\theta>2$ such that
\begin{equation*}
0< \theta F(t) = \theta \int_{0}^{t} f(\tau) \, d\tau \leq f(t) \, t \, \mbox{ for all } t> 0;
\end{equation*}
\item The map $\displaystyle{t \mapsto \frac{f(t)}{t}}$ is increasing for every $t>0$.
\end{compactenum}
\noindent

The nonlocal operator $(-\Delta)^{s}$ appearing in (\ref{P}), is the so-called fractional Laplacian, that, up to a positive constant, is defined by
$$
(-\Delta)^{s}u(x):={\rm P.V.}\int_{\R^{N}} \frac{u(x)-u(y)}{|x-y|^{N+2s}} dy,
$$
for every $u:\R^{N}\rightarrow \R$ sufficiently smooth; see for instance \cite{DPV}.\\
We recall that the study of (\ref{P}) is motivated by the search of standing wave solutions for the time-dependent fractional Schr\"odinger equation
\begin{equation*}
\imath \frac{\partial \psi}{\partial t}=(-\Delta)^{s} \psi+V(x)\psi -g(x, |\psi|) \quad (t, x)\in \R\times \R^{N},
\end{equation*}
which plays a fundamental role in the fractional quantum mechanic \cite{Laskin1, Laskin2}. \\
From the point of view of nonlinear analysis, we mention the works \cite{A1, A2, A3, A4, CW, DPPV, FFV, FQT, FLS, MBR, Secchi1} where several existence and multiplicity results for fractional Schr\"odinger equations have been obtained via different variational methods.
More in general, the study of nonlinear elliptic equations involving nonlocal and fractional operators has gained tremendous popularity during the last decade, because of intriguing structure of these operators and their application in many areas of research such as optimization, finance, phase transition phenomena, minimals surfaces, game theory, population dynamics. For more details and applications on this topic we refer the interested reader to \cite{DPV, MBRS}.
\noindent

Recently, the analysis of concentration phenomenon of solutions for the nonlinear fractional Schr\"odinger equation (\ref{P}) has attracted the attention from many mathematicians.\\
Davila et al. \cite{DDPW} proved via Liapunov-Schmidt reduction method, that (\ref{P}) has multi-peak  solutions, under the assumptions that $f(u)=u^{p}$ with $p\in (1, 2^{*}_{s}-1)$, and the potential $V$ verifies the following conditions
$$
V\in C^{1, \alpha}(\R^{N})\cap L^{\infty}(\R^{N}) \mbox{ and } \inf_{x\in \R^{N}} V(x)>0.
$$
Shang and Zhang \cite{SZ} studied the following class of fractional Schr\"odinger equations 
\begin{align*}
\varepsilon^{2s}(-\Delta)^{s} u+V(x)u=K(x)|u|^{p-1}u \mbox{ in } \R^{N} 
\end{align*}
where $V$ and $K$ are positive smooth functions, showing the existence and multiplicity of solutions which concentrate near some critical points of $\Gamma(x)=[V(x)]^{\frac{p-1}{p+1}-\frac{N}{2s}}[K(x)]^{-\frac{2}{p-1}}$ by applying perturbative variational method.\\
By means of the Lyusternik-Shnirelmann and Morse theories, Figueiredo and Siciliano \cite{FS} proved a multiplicity result for (\ref{P}), with $f\in C^{1}$ and satisfying the hypotheses $(f_1)$-$(f_4)$.\\
Alves and Miyagaki \cite{AM} dealt with the existence of a positive solution to (\ref{P}) by combining the penalization method developed in \cite{DF} and the Caffarelli-Silvestre extension technique \cite{CS}.\\
He and Zou \cite{HZ} investigated the existence and the concentration of positive solutions for a class of fractional Schr\"odinger equations involving the critical Sobolev exponent.\\
Motivated by the above papers, the aim of this work is to study the multiplicity and the concentration of positive solutions of (\ref{P}), involving a continuous nonlinearity satisfying the assumptions $(f_1)$-$(f_4)$. In particular, we are interested in relating the number of positive solutions of (\ref{P}) with the topology of the set $M=\{x\in \Lambda: V(x)=V_{0}\}$. We recall that if $Y$ is a given closed set of a topological space $X$, we denote by $cat_{X}(Y)$ the Ljusternik-Schnirelmann category of $Y$ in $X$, that is the least number of closed and contractible sets in $X$ which cover $Y$.\\
The first main result of this paper is the following:
\begin{thm}\label{thmf}
Suppose that $V$ verifies $(V_1)$-$(V_2)$ and $f$ satisfies $(f_1)$-$(f_4)$. Then, for any $\delta>0$ such that $M_{\delta}=\{x\in \R^{N}: dist(x, M)\leq \delta\}\subset \Lambda$, there exists $\e_{\delta}>0$ such that, for any $\e\in (0, \e_{\delta})$, the problem  \eqref{P} has at least $cat_{M_{\delta}}(M)$ solutions. Moreover, if $u_{\e}$ denotes one of these positive solutions and $x_{\e}\in \R^{N}$ its global maximum, then 
$$
\lim_{\e\rightarrow 0} V(x_{\e})=V_{0}.
$$
\end{thm}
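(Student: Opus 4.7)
The plan is to implement the del Pino--Felmer penalization scheme in the fractional setting, couple it with Ljusternik--Schnirelmann category theory, and finally show that the solutions of the modified problem are in fact solutions of \eqref{P}. After the rescaling $u \mapsto u(\e \cdot)$, equation \eqref{P} becomes $(-\Delta)^{s} u + V(\e x) u = f(u)$ in $\R^{N}$. Following \cite{DF,AM}, I would fix $k > \theta/(\theta-2)$ and $a>0$ with $f(a)/a = V_{0}/k$, and set
\[
\tilde{f}(t) = \begin{cases} f(t), & 0 \leq t \leq a, \\ \dfrac{V_{0}}{k}\, t, & t > a, \end{cases} \qquad g(x,t) = \chi_{\Lambda}(x)\, f(t^{+}) + (1-\chi_{\Lambda}(x))\, \tilde{f}(t^{+}).
\]
The associated functional $J_{\e}$ on $H^{s}(\R^{N})$, with the weighted norm $\|u\|_{\e}^{2} = [u]_{s}^{2} + \int_{\R^{N}} V(\e x)\, u^{2}\, dx$, satisfies the mountain-pass/Nehari geometry under $(f_1)$--$(f_4)$, and the choice of $k$ ensures that $J_{\e}$ verifies the Palais--Smale condition on its Nehari manifold $\mathcal{N}_{\e}$. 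Concurrently, the autonomous problem with constant potential $V_{0}$ and nonlinearity $f$ admits a positive radial ground state $w$ with polynomial decay at infinity, and I denote its least energy level by $c_{V_{0}}$.

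\textbf{Step 2 (photography, barycenter and category).} The multiplicity count comes from a \emph{photography map} $\Phi_{\e}:M \to \mathcal{N}_{\e}$ defined by
\[
\Phi_{\e}(y)(x) = t_{\e}(y)\, \eta(|\e x - y|)\, w\!\left(\frac{\e x - y}{\e}\right),
\]
where $\eta$ is a smooth cut-off making $\supp \Phi_{\e}(y) \subset \Lambda/\e$ for $y\in M$ and $\e$ small, and $t_{\e}(y)>0$ is the Nehari projection. Using $\e y \in M \subset \Lambda$ together with the decay of $w$, I would show $\max_{y\in M} J_{\e}(\Phi_{\e}(y)) = c_{V_{0}} + o_{\e}(1)$. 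Pairing this with a barycenter map $\beta_{\e}$ defined on the sublevel $\{J_{\e} \leq c_{V_{0}} + h(\e)\}$ (with $h(\e)\to 0^{+}$), and a Lions-type concentration argument that forces low-energy Nehari sequences to localize inside $M_{\delta}/\e$, one proves that $\beta_{\e} \circ \Phi_{\e}$ is homotopic to the inclusion $M \hookrightarrow M_{\delta}$. Standard Ljusternik--Schnirelmann theory then delivers at least $cat_{M_{\delta}}(M)$ critical points of $J_{\e}|_{\mathcal{N}_{\e}}$ in that sublevel.

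\textbf{Step 3 (return to \eqref{P} and concentration).} It remains to show that each such critical point $u_{\e}$ of the penalized problem solves \eqref{P} itself. I would run a Moser-type iteration adapted to $(-\Delta)^{s}$ to obtain a uniform $L^{\infty}$ bound on $u_{\e}$, and then a barrier argument exploiting the decay of the linear fractional resolvent to derive a uniform polynomial decay of $u_{\e}$ at infinity. Together these estimates yield $u_{\e}(x) \leq a$ for every $x$ outside $\Lambda/\e$, so that $g(\e x, u_{\e}(x)) = f(u_{\e}(x))$ everywhere and $u_{\e}$ (rescaled back) solves \eqref{P}. The same decay locates the global maximum point $x_{\e}$ inside $\Lambda$ and yields $dist(\e x_{\e}, M) \to 0$, whence $V(x_{\e}) \to V_{0}$ by continuity and $(V_{2})$.

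\textbf{Main obstacle.} The delicate point is Step 3: because $(-\Delta)^{s}$ is nonlocal, one cannot simply truncate $u_{\e}$ outside $\Lambda/\e$ and test the equation, as the kernel $|x-y|^{-N-2s}$ couples far-away values. One must carefully combine uniform $L^{\infty}$ bounds, a fractional Kato-type inequality and a quantitative tail estimate on $u_{\e}$ to control the nonlocal contribution and reach the threshold $u_{\e} \leq a$ outside $\Lambda/\e$; this is where the fractional argument genuinely departs from its classical local counterpart.
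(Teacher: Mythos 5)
Your overall strategy (del Pino--Felmer penalization after rescaling, photography map $\Phi_{\e}$ built from a ground state of the autonomous problem, barycenter map plus a concentration/compactness lemma, Ljusternik--Schnirelmann category, and finally an $L^{\infty}$/decay estimate to pass from the penalized problem back to \eqref{P} and to locate the maxima) is the same as the paper's. However, there is one genuine gap in Step 2: you invoke ``standard Ljusternik--Schnirelmann theory'' for $J_{\e}|_{\mathcal{N}_{\e}}$, but under the hypotheses of the theorem $f$ is only \emph{continuous}, so $J_{\e}$ is merely $C^{1}$ and the Nehari set $\mathcal{N}_{\e}=\{u\neq 0:\langle J_{\e}'(u),u\rangle=0\}$ is in general not a $C^{1}$ manifold; the classical constrained LS machinery (and the arguments of \cite{AF,FS}, which need $f\in C^{1}$) is not available on $\mathcal{N}_{\e}$. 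This is precisely the technical point the paper is organized around: it uses the Szulkin--Weth framework \cite{SW}, i.e.\ the homeomorphism $m_{\e}:\mathbb{S}_{\e}^{+}\to\mathcal{N}_{\e}$ and the reduced functional $\psi_{\e}=J_{\e}\circ m_{\e}$ on the (incomplete) $C^{1,1}$ sphere $\mathbb{S}_{\e}^{+}$ (Lemma \ref{lemz1}, Proposition \ref{propz2}), verifies the Palais--Smale condition for $\psi_{\e}$ (Lemma \ref{lemma2.10}), and applies the abstract category result (Corollary $28$ in \cite{SW}) to $\psi_{\e}$; an Ekeland-type argument on $\overline{\mathbb{S}}_{0}^{+}$ is also needed in the key compactness Lemma \ref{prop3.3}. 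Without this (or an equivalent device), your multiplicity count in Step 2 does not go through as stated for merely continuous $f$.

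Two further remarks. First, the obstacle you single out in Step 3 (uniform decay of the penalized solutions, needed to get $u_{\e}<a$ outside $\Lambda_{\e}$) is real but is handled in the paper by quoting the uniform polynomial decay of Lemma $2.6$ in \cite{AM}, obtained via the Caffarelli--Silvestre extension; your plan to rederive it via Moser iteration plus a nonlocal barrier is plausible but is only sketched, and it is the part of your argument that would require genuine work if you do not cite \cite{AM}. Second, minor points: the paper takes $K>\frac{2\theta}{\theta-2}$ (your $k>\frac{\theta}{\theta-2}$ is the classical del Pino--Felmer choice and would force you to recheck the constants in the coercivity and $\mathcal{N}_{\e}\subset H_{\e}^{+}$ estimates), and the homotopy between $\beta_{\e}\circ\Phi_{\e}$ and the inclusion $M\hookrightarrow M_{\delta}$, together with $cat_{\alpha_{\e}(M)}\alpha_{\e}(M)\geq cat_{M_{\delta}}(M)$, should be spelled out at the level of the reduced functional $\psi_{\e}$ rather than of $J_{\e}|_{\mathcal{N}_{\e}}$.
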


\noindent
The proof of Theorem \ref{thmf} is obtained by using critical point theory and following some ideas used in \cite{AF, DF}. Since we don't have any information about the behavior of potential $V$ at infinity, we use the penalization method introduced by del Pino and Felmer in \cite{DF}. \\
Firstly, by using the change of variable $u(x)\mapsto u(\e x)$ we can see that the problem (\ref{P}) is equivalent to the following one
\begin{equation}\label{R}
(-\Delta)^{s} u + V(\e x)u =  f(u) \mbox{ in } \R^{N}. 
\end{equation}
Fix $K>\frac{2\theta}{\theta-2}>2$ and $a>0$ such that $\frac{f(a)}{a}=\frac{V_{0}}{k}$, and we introduce the functions
$$
\tilde{f}(t):=
\begin{cases}
f(t)& \text{ if $t \leq a$} \\
\frac{V_{0}}{k} t   & \text{ if $t >a$}.
\end{cases}
$$ 
and
$$
g(x, t)=\chi_{\Lambda}(x)f(t)+(1-\chi_{\Lambda}(x))\tilde{f}(t),
$$
where $\chi_{\Lambda}$ is the characteristic function on $\Lambda$, and  we write $G(x, t)=\int_{0}^{t} g(x, \tau)\, d\tau$.\\
Let us note that from the assumptions $(f_1)$-$(f_3)$, $g$ satisfies the following properties:
\begin{compactenum}[($g_1$)]
\item $\displaystyle{\lim_{t\rightarrow 0} \frac{g(x, t)}{t}=0}$ uniformly in $x\in \R^{N}$;
\item $g(x, t)\leq f(t)$ for any $x\in \R^{N}$ and $t>0$;
\item $0< \theta G(x, t)\leq g(x, t)t$ for any $x\in \Lambda$ and $t>0$;
\item $0\leq 2 G(x, t)\leq g(x, t)t\leq \frac{V_{0}}{K}t^{2}$ for any $x\in \R^{N}\setminus \Lambda$ and $t>0$.
\end{compactenum}
Thus, we consider the following auxiliary problem 
\begin{equation}\label{Pe}
(-\Delta)^{s} u + V(\e x)u =  g(\e x, u) \mbox{ in } \R^{N} 
\end{equation}
and we note that if $u$ is a solution of (\ref{Pe}) such that 
\begin{equation}\label{ue}
u(x)<a \mbox{ for all } x\in \R^{N}\setminus \Lambda_{\e},
\end{equation}
where $\Lambda_{\e}:=\{x\in \R^{N}: \e x\in \Lambda\}$, then $u$ solves (\ref{R}), in view of the definition of $g$.\\
It is clear that, weak solutions to (\ref{Pe}) are critical points of the Euler-Lagrange functional
$$
J_{\e}(u)=\frac{1}{2}\int_{\R^{N}}|(-\Delta)^{\frac{s}{2}}u|^{2}+V(\e x) u^{2}\, dx-\int_{\R^{N}} G(\e x, u)\, dx
$$
defined on the fractional space
$$
H^{s}_{\e}=\{u\in \mathcal{D}^{s, 2}(\R^{N}): \int_{\R^{N}} V(\e x) u^{2}\, dx<\infty\}.
$$
Hence, as in \cite{AF}, it seems natural to work on  the Nehari manifold $\mathcal{N}_{\e}$ associated to $J_{\e}$.
Anyway, $f$ is only continuous, so $\mathcal{N}_{\e}$ is not differentiable, and we cannot adapt in our framework the techniques developed in \cite{AF}, to deduce a multiplicity result for (\ref{Pe}). To circumvent this difficulty, we will exploit some abstract category results obtained in \cite{SW}. 
After that, we will make use of the fact that solutions $u_{\e}$ of (\ref{Pe}) have a polynomial decay at infinity uniformly in $\e$ \cite{AM}, to show that for all $\e>0$ small enough, these functions $u_{\e}$ verify (\ref{ue}). \\
We would like to observe that our result complement the result in \cite{AM}, in the sense that now we consider the question related to the multiplicity. Moreover, Theorem \ref{thmf} can be seen as the fractional analogue of the multiplicity result obtained in \cite{AF} (see Theorem $1.1$ in \cite{AF}), but assuming  that the nonlinearity $f$ is only continuous, and not $C^{1}$.
\smallskip

\noindent
In the second part of the paper, we consider a supercritical version of problem (\ref{P}). More precisely, we deal with the following parametric problem  
\begin{equation}\label{Pcritical}
\e^{2s}(-\Delta)^{s} u + V(x)u =  |u|^{q-2}u+\lambda |u|^{r-2}u \mbox{ in } \R^{N} 
\end{equation}
where $\e>0$, $\lambda>0$, and $2<q<2^{*}_{s}\leq r$. We recall that when $r=2^{*}_{s}$ and $\lambda=1$, the multiplicity for problem (\ref{Pcritical}) has been studied in \cite{HZ}.

Our second main result can be stated as follows:
\begin{thm}\label{thmf2}
Suppose that $V$ verifies $(V_1)$-$(V_2)$. Then there exists $\lambda_{0}>0$ with the following property: for any $\lambda\in (0, \lambda_{0})$ and $\delta>0$ given, there exists $\e_{\lambda, \delta}>0$ such that, for any $\e\in (0, \e_{\lambda, \delta})$, the problem  \eqref{Pcritical} has at least $cat_{M_{\delta}}(M)$ solutions.
Moreover, if $u_{\e}$ denotes one of these positive solutions and $x_{\e}\in \R^{N}$ its global maximum, then 
$$
\lim_{\e\rightarrow 0} V(x_{\e})=V_{0}.
$$
\end{thm}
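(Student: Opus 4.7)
My plan is to reduce Theorem \ref{thmf2} to Theorem \ref{thmf} via a truncation of the supercritical term $\lambda|u|^{r-2}u$ that produces a nonlinearity satisfying $(f_1)$--$(f_4)$, and then to apply a Moser-type iteration to show that, provided $\lambda$ is small, the resulting positive solutions never cross the truncation threshold and hence solve the original problem \eqref{Pcritical}.

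For a threshold $T>0$ to be fixed later, I would introduce
\[
f_{T,\lambda}(t)=
\begin{cases}
0 & \text{if } t\leq 0,\\[2pt]
t^{q-1}+\lambda t^{r-1} & \text{if } 0<t\leq T,\\[2pt]
\bigl(1+\lambda T^{r-q}\bigr)t^{q-1} & \text{if } t>T.
\end{cases}
\]
A direct check based only on $2<q<2^{*}_{s}\leq r$ shows that $f_{T,\lambda}$ is continuous and fulfils $(f_1)$, $(f_3)$ (with $\theta=q$) and $(f_4)$ (the monotonicity of $t\mapsto f_{T,\lambda}(t)/t$ on each piece matches at $t=T$); since $f_{T,\lambda}(t)\leq (1+\lambda T^{r-q})t^{q-1}$ for all $t\geq 0$, the subcritical growth $(f_2)$ holds with any exponent $q'\in(q,2^{*}_{s})$ in place of $q$. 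Theorem \ref{thmf} therefore applies to the truncated problem
\[
\e^{2s}(-\Delta)^{s}u+V(x)u=f_{T,\lambda}(u)\quad\text{in }\R^{N},
\]
and yields, for every $\delta>0$ with $M_{\delta}\subset\Lambda$, a threshold $\e_{T,\lambda,\delta}>0$ such that, for every $\e\in(0,\e_{T,\lambda,\delta})$, this equation admits at least $cat_{M_{\delta}}(M)$ positive solutions $u_{\e}$ enjoying the concentration behaviour described in Theorem \ref{thmf}.

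The core step is then an $\e$-uniform $L^{\infty}$ bound for the $u_{\e}$ that tracks the dependence on $\lambda$ and $T$. A Moser-type iteration for the fractional Laplacian, carried out as in \cite{AM}, combined with the mountain-pass characterisation of the energy level of $u_{\e}$ (which remains bounded as $\e\to 0$, yielding a uniform control of $\|u_{\e}\|_{L^{2^{*}_{s}}}$), should produce an inequality of the form
\[
\|u_{\e}\|_{L^{\infty}(\R^{N})}\leq C_{0}\bigl(1+\lambda T^{r-q}\bigr)^{\sigma},
\]
with $C_{0},\sigma>0$ depending only on $N$, $s$, $q$ and $V_{0}$.

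To close the argument I would first fix $T>0$ so large that $C_{0}\,2^{\sigma}\leq T$, and then choose $\lambda_{0}>0$ small enough to guarantee $\lambda T^{r-q}\leq 1$ for every $\lambda\in(0,\lambda_{0})$; with these choices the estimate above gives $\|u_{\e}\|_{\infty}\leq T$, so that $f_{T,\lambda}(u_{\e})=u_{\e}^{q-1}+\lambda u_{\e}^{r-1}$ and each $u_{\e}$ is an actual positive solution of \eqref{Pcritical}. The multiplicity and the concentration statement $\lim_{\e\to 0}V(x_{\e})=V_{0}$ then come directly from the corresponding conclusions of Theorem \ref{thmf}. The principal obstacle I expect is this uniform $L^{\infty}$ bound: the Moser iteration has to be performed with fully explicit, quantitative control of its constants in terms of $\lambda$ and $T$, since it is precisely the interplay between the smallness of $\lambda$ and the size of $T^{r-q}$ that allows one to force the solutions to stay below the truncation threshold and render the truncation inactive.
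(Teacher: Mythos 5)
Your proposal is correct and follows essentially the same route as the paper: truncate the supercritical term at a threshold $K$ (your $T$), check $(f_1)$--$(f_4)$ and invoke Theorem \ref{thmf}, obtain a uniform-in-$\lambda$ and $\e$ energy bound for the solutions, and run a Moser iteration (the paper does it on the Caffarelli--Silvestre extension) to get $\|u_{\e,\lambda}\|_{L^{\infty}}\leq C\,(1+\lambda K^{r-q})^{\sigma}$, finally choosing $K$ large and $\lambda_{0}$ small so the truncation is never active. The paper's Lemma \ref{Fig1} supplies exactly the uniform $H^{s}_{\e}$-bound (via $c_{V_{0},\lambda}\leq c_{V_{0},0}$) that your iteration needs to make the constants independent of $\lambda$ and $\e$.
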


\noindent
The proof of the above result is based on some arguments developed in \cite{CY, FF, Rab}. We first truncate in a suitable way the nonlinearity on the left hand side of \eqref{Pcritical}, in order to deal with a new subcritical problem.
Taking into account Theorem \ref{thmf}, we know that a multiplicity result for this truncated problem holds. Then, we deduce a priori bounds for these solutions, and by using a Moser iteration technique \cite{Moser}, we are able to show that, for $\lambda>0$ sufficiently small, the solutions of the truncated problem also satisfy the original problem \eqref{Pcritical}. \\
As far as we know, in the current literature do not appear multiplicity results for supercritical fractional problems via Ljusternik-Schnirelmann theory, so all results presented here are new. 

\smallskip

\noindent
The body of the paper is the following. In Section $2$ we collect some notations and we give some technical lemmas. The Section $3$ is devoted to prove the existence of multiple solutions to (\ref{P}). In Section $4$ we give the proof of Theorem \ref{thmf2}.

\section{Preliminaries}

In this section we fix the notations and we prove some useful lemmas.
For any $s\in (0,1)$ we define $\mathcal{D}^{s, 2}(\R^{N})$ as the completion of $C^{\infty}_{0}(\R^{N})$ with respect to
$$
[u]^{2}=\iint_{\R^{2N}} \frac{|u(x)-u(y)|^{2}}{|x-y|^{N+2s}} \, dx \, dy =\|(-\Delta)^{\frac{s}{2}} u\|^{2}_{L^{2}(\R^{N})},
$$
that is
$$
\mathcal{D}^{s, 2}(\R^{N})=\left\{u\in L^{2^{*}_{s}}(\R^{N}): [u]_{H^{s}(\R^{N})}<\infty\right\}.
$$
Let us introduce the fractional Sobolev space
$$
H^{s}(\R^{N})= \left\{u\in L^{2}(\R^{N}) : \frac{|u(x)-u(y)|}{|x-y|^{\frac{N+2s}{2}}} \in L^{2}(\R^{2N}) \right \}
$$
endowed with the natural norm 
$$
\|u\|_{H^{s}(\R^{N})} = \sqrt{[u]^{2} + \|u\|_{L^{2}(\R^{N})}^{2}}.
$$

\noindent
For the convenience of the reader, we recall the following enbeddings:
\begin{thm}\cite{DPV}\label{Sembedding}
Let $s\in (0,1)$ and $N>2s$. Then there exists a sharp constant $S_{*}=S(N, s)>0$
such that for any $u\in H^{s}(\R^{N})$
\begin{equation}\label{FSI}
\|u\|^{2}_{L^{2^{*}_{s}}(\R^{N})} \leq S_{*} [u]^{2}. 
\end{equation}
Moreover $H^{s}(\R^{N})$ is continuously embedded in $L^{q}(\R^{N})$ for any $q\in [2, 2^{*}_{s}]$ and compactly in $L^{q}_{loc}(\R^{N})$ for any $q\in [2, 2^{*}_{s})$. 
\end{thm}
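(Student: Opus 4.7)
The plan is to establish the three claims in order: first the sharp Sobolev-type inequality \eqref{FSI}, then continuous embedding into $L^q(\R^N)$ for intermediate exponents, and finally local compactness. The backbone is the identification $[u]^{2}=c_{N,s}\|(-\Delta)^{s/2}u\|_{L^{2}(\R^{N})}^{2}$ together with Riesz potentials.

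First I would prove \eqref{FSI} for $u\in C^{\infty}_{c}(\R^{N})$ (and then extend by density). Using the Fourier representation,
\[
[u]^{2}=c_{N,s}\int_{\R^{N}}|\xi|^{2s}|\hat u(\xi)|^{2}\,d\xi,
\]
so $u=I_{s}\bigl((-\Delta)^{s/2}u\bigr)$, where $I_{s}$ is the Riesz potential of order $s$, whose kernel is (up to a constant) $|x|^{s-N}$. The Hardy-Littlewood-Sobolev inequality yields that $I_{s}\colon L^{2}(\R^{N})\to L^{2^{*}_{s}}(\R^{N})$ is bounded, with the Gagliardo-Aronszajn exponent matching $1/2^{*}_{s}=1/2-s/N$; this furnishes a constant $S_{*}>0$ with $\|u\|_{L^{2^{*}_{s}}}^{2}\le S_{*}[u]^{2}$. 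Sharpness is obtained via a symmetrization/rearrangement argument à la Lieb identifying the extremal functions as bubble profiles.

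Next, for any $q\in[2,2^{*}_{s}]$ and $u\in H^{s}(\R^{N})$, the standard interpolation inequality
\[
\|u\|_{L^{q}(\R^{N})}\le \|u\|_{L^{2}(\R^{N})}^{\alpha}\|u\|_{L^{2^{*}_{s}}(\R^{N})}^{1-\alpha},\qquad \tfrac{1}{q}=\tfrac{\alpha}{2}+\tfrac{1-\alpha}{2^{*}_{s}},
\]
combined with \eqref{FSI}, gives the continuous embedding $H^{s}(\R^{N})\hookrightarrow L^{q}(\R^{N})$.

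For the local compact embedding, take a bounded sequence $(u_{n})\subset H^{s}(\R^{N})$ and fix a bounded open $\Omega\subset\R^{N}$. I would verify the hypotheses of the Riesz-Fréchet-Kolmogorov criterion in $L^{q}(\Omega)$: boundedness in $L^{q}$ follows from the previous step, while the equicontinuity estimate
\[
\|u_{n}(\cdot+h)-u_{n}\|_{L^{2}(\R^{N})}^{2}\le C\,|h|^{2s}\,[u_{n}]^{2}
\]
(proved by Fourier means from $|e^{ih\cdot\xi}-1|^{2}\le C(|h||\xi|)^{2s}$ when $2s\le 2$) yields equicontinuity in $L^{2}(\Omega)$, hence in $L^{q}(\Omega)$ for $q\in[2,2^{*}_{s})$ after interpolating with the uniform $L^{2^{*}_{s}}$-bound. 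Applying Riesz-Fréchet-Kolmogorov then produces a convergent subsequence in $L^{q}(\Omega)$, proving the compact embedding into $L^{q}_{loc}(\R^{N})$. A diagonal argument over an exhausting family of balls finishes the statement.

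The main obstacle is really the two nontrivial classical inputs: deriving the Hardy-Littlewood-Sobolev inequality with the correct exponent for step one, and verifying the fractional equicontinuity estimate via the Fourier bound $|e^{ih\cdot\xi}-1|\lesssim |h|^{s}|\xi|^{s}$ uniformly for $s\in(0,1)$; the boundary case $q=2^{*}_{s}$ for continuous embedding and the failure of compactness there (due to translation and dilation invariance of \eqref{FSI}) also deserve care, which is why local compactness is only asserted in the strict subcritical range $q\in[2,2^{*}_{s})$.
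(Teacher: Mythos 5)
This theorem is not proved in the paper at all: it is quoted verbatim from the reference \cite{DPV} (the ``Hitchhiker's guide''), so there is no internal argument for you to match. Your sketch is a correct and standard route: the identity $[u]^{2}=c_{N,s}\int_{\R^{N}}|\xi|^{2s}|\hat u(\xi)|^{2}\,d\xi$, the representation $u=I_{s}\bigl((-\Delta)^{s/2}u\bigr)$ and Hardy--Littlewood--Sobolev give \eqref{FSI}; interpolation between $L^{2}$ and $L^{2^{*}_{s}}$ gives the continuous embeddings; and the translation estimate $\|u(\cdot+h)-u\|_{L^{2}}^{2}\leq C|h|^{2s}[u]^{2}$ (from $|e^{ih\cdot\xi}-1|\leq C|h|^{s}|\xi|^{s}$) plus interpolation and the Kolmogorov--Riesz--Fr\'echet criterion, followed by a diagonal argument, gives compactness in $L^{q}_{loc}$ for $q\in[2,2^{*}_{s})$. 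The cited source proceeds differently in one respect: the Sobolev inequality in \cite{DPV} is proved by an elementary covering/rearrangement argument that avoids the Fourier transform and Riesz potentials altogether, and its constant is \emph{not} sharp; the sharp constant (and the bubble extremals you allude to) requires Lieb's sharp HLS inequality, i.e.\ the Cotsiolis--Tavoularis refinement, so if you insist on sharpness your appeal ``\`a la Lieb'' is carrying real weight and should be made explicit --- although nothing in the present paper uses sharpness, only the validity of \eqref{FSI} with some constant. Two small points of care in your compactness step: on a bounded $\Omega$ the Fr\'echet--Kolmogorov criterion is applied to the restrictions of functions defined on all of $\R^{N}$, which is exactly why your global $L^{2}$ translation bound suffices; and the interpolation you use to pass from $L^{2}$-equicontinuity to $L^{q}$-equicontinuity needs $q<2^{*}_{s}$ strictly, which is consistent with the statement. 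With these caveats the proposal is sound.
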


\noindent
We also recall the following Lions-compactness lemma.
\begin{lem}\cite{FQT}\label{lions lemma}
Let $N>2s$. If $(u_{n})$ is a bounded sequence in $H^{s}(\R^{N})$ and if
$$
\lim_{n \rightarrow \infty} \sup_{y\in \R^{N}} \int_{B_{R}(y)} |u_{n}|^{2} dx=0
$$
where $R>0$,
then $u_{n}\rightarrow 0$ in $L^{t}(\R^{N})$ for all $t\in (2, 2^{*}_{s})$.
\end{lem}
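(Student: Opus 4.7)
The plan is to reduce the claim, via classical $L^p$-interpolation, to establishing $u_n \to 0$ in $L^{t_0}(\R^N)$ for one carefully chosen $t_0 \in (2, 2^{*}_{s})$. Once that is known, for any $t \in (2, 2^{*}_{s})$ the standard interpolation
\[
\|u_n\|_{L^t(\R^N)} \leq \|u_n\|_{L^{t_0}(\R^N)}^{1-a}\, \|u_n\|_{L^r(\R^N)}^{a},
\]
with $r \in \{2, 2^{*}_{s}\}$ chosen so that $r$ and $t_0$ bracket $t$ and $a \in [0,1)$ determined by $\tfrac{1}{t} = \tfrac{1-a}{t_0} + \tfrac{a}{r}$, combined with the $H^s(\R^N)$-boundedness of $(u_n)$ and the embedding $H^s(\R^N) \hookrightarrow L^2 \cap L^{2^{*}_{s}}$ from Theorem~\ref{Sembedding}, forces $\|u_n\|_{L^t(\R^N)} \to 0$.

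For the $L^{t_0}$-smallness I would tile $\R^N$ by disjoint cubes $\{Q_i\}$ of side $R/\sqrt{N}$, each contained in some ball $B_R(y_i)$; writing $\eta_n := \sup_{y\in\R^N}\int_{B_R(y)}|u_n|^2\,dx \to 0$, the hypothesis gives $\|u_n\|_{L^2(Q_i)}^2 \leq \eta_n$ uniformly in $i$. On each $Q_i$, the H\"older interpolation
\[
\|u_n\|_{L^{t_0}(Q_i)}^{t_0} \leq \|u_n\|_{L^2(Q_i)}^{(1-\theta)t_0}\,\|u_n\|_{L^{2^{*}_{s}}(Q_i)}^{\theta t_0}, \qquad \frac{1}{t_0} = \frac{1-\theta}{2} + \frac{\theta}{2^{*}_{s}},
\]
combined with a localised fractional Sobolev inequality $\|u_n\|_{L^{2^{*}_{s}}(Q_i)} \leq C\,\|u_n\|_{H^s(Q_i)}$ with $C$ independent of $i$, is the key estimate. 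Choosing $t_0 := 4(1-1/2^{*}_{s})$ --- which always lies in $(2, 2^{*}_{s})$ --- forces $\theta t_0 = 2$ and $(1-\theta)t_0 = t_0 - 2 > 0$, so
\[
\|u_n\|_{L^{t_0}(Q_i)}^{t_0} \leq C\,\|u_n\|_{L^2(Q_i)}^{t_0-2}\,\|u_n\|_{H^s(Q_i)}^2.
\]
Summing over $i$ and using $\|u_n\|_{L^2(Q_i)}^{t_0-2} \leq \eta_n^{(t_0-2)/2}$ together with $\sum_i \|u_n\|_{H^s(Q_i)}^2 \leq \|u_n\|_{H^s(\R^N)}^2$ (trivial for the $L^2$ part, and $\sum_i \iint_{Q_i\times Q_i} \leq \iint_{\R^{2N}}$ for the Gagliardo seminorm because the $Q_i$ are disjoint), I arrive at
\[
\|u_n\|_{L^{t_0}(\R^N)}^{t_0} = \sum_i \|u_n\|_{L^{t_0}(Q_i)}^{t_0} \leq C\,\eta_n^{(t_0-2)/2}\,\|u_n\|_{H^s(\R^N)}^2 \to 0,
\]
closing the reduction.

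I expect the main obstacle to be the localised fractional Sobolev embedding $H^s(Q_i) \hookrightarrow L^{2^{*}_{s}}(Q_i)$ with a constant uniform in $i$: the nonlocal nature of the Gagliardo seminorm makes cutoff arguments awkward. The cleanest justification is a scaling argument that reduces each $Q_i$ to $[0,1]^N$ and invokes the standard fractional Sobolev embedding on this bounded Lipschitz domain, the scaling being compatible because $N/2^{*}_{s} = N/2 - s$.
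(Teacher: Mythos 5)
Your proof is correct. Note that the paper itself does not prove this lemma at all -- it is quoted from \cite{FQT} -- so there is no internal argument to compare against; what you have written is essentially the classical Lions vanishing lemma proof transplanted to the fractional setting, which is also the spirit of the cited source. Your computations check out: $t_{0}=4(1-1/2^{*}_{s})$ does lie in $(2,2^{*}_{s})$ and does give $\theta t_{0}=2$, the disjointness of the cubes makes $\sum_{i}\iint_{Q_{i}\times Q_{i}}\leq \iint_{\R^{2N}}$ immediate, and the final interpolation between $L^{2}$, $L^{t_{0}}$ and $L^{2^{*}_{s}}$ (the latter controlled via \eqref{FSI}) closes the argument. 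Two remarks. First, the ``main obstacle'' you anticipate is not really one: since all cubes are congruent and of fixed side $R/\sqrt{N}$, the local embedding $H^{s}(Q_{i})\hookrightarrow L^{2^{*}_{s}}(Q_{i})$ (valid because a cube is a bounded Lipschitz, hence $W^{s,2}$-extension, domain; see \cite{DPV}) has a constant independent of $i$ by translation invariance alone -- no scaling argument is needed, and the constant may harmlessly depend on $R$, which is fixed. Second, the more common write-up (as in the local case and in the fractional references) covers $\R^{N}$ by balls $B_{R}(y_{i})$ with finite overlap and sums the interpolated estimate over the balls; your disjoint-cube tiling is a clean variant whose only payoff is precisely that the Gagliardo seminorm localizes by restricting to the disjoint sets $Q_{i}\times Q_{i}$, at the price of needing the local Sobolev embedding on a cube rather than using the global seminorm directly. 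Either route is fine; yours is complete and correct.
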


\noindent
For any $\e>0$, we denote by $H^{s}_{\e}$ the completion of $C^{\infty}_{0}(\R^{N})$ with respect to the norm
$$
\|u\|^{2}_{H^{s}_{\e}}=\iint_{\R^{2N}} \frac{|u(x)-u(y)|^{2}}{|x-y|^{N+2s}}\, dx dy+\int_{\R^{N}} V(\e x) u^{2}(x)\, dx.
$$
It is clear that $H^{s}_{\e}$ is a Hilbert space with respect to the inner product 
$$
(u, v)_{H^{s}_{\e}}=\iint_{\R^{2N}} \frac{(u(x)-u(y))}{|x-y|^{N+2s}}(v(x)-v(y))\, dx dy+\int_{\R^{N}} V(\e x) u v\, dx.
$$
When $\e=0$, we set $H^{s}_{0}=H^{s}(\R^{N})$ and $\|u\|^{2}_{H^{s}_{0}}=[u]^{2}+\int_{\R^{N}} V_{0} u^{2}\, dx$. 
To study (\ref{R}), we seek critical points of the following $C^{1}$-functional
$$
J_{\e}(u)=\frac{1}{2}\|u\|^{2}_{H^{s}_{\e}}-\int_{\R^{N}} G(\e x, u)\, dx
$$
defined for any $u\in H^{s}_{\e}$. 
We also define the following autonomous functional
$$
J_{0}(u)=\frac{1}{2}\|u\|^{2}_{H^{s}_{0}}-\int_{\R^{N}} F(u)\, dx 
$$
for any $u\in H^{s}(\R^{N})$.\\
Arguing as in \cite{AM}, it is easy to show that $J_{\e}$ satisfies the assumptions of the mountain pass theorem \cite{AR}. More precisely, we have
\begin{lem}\label{MPG}
$J_{\e}$ has a mountain pass geometry, that is
\begin{compactenum}[(i)]
\item $J_{\e}(0)=0$;
\item there exists $\alpha, \rho>0$ such that $J_{\e}(u)\geq \alpha$ for any $u\in H^{s}_{\e}$ such that $\|u\|_{H^{s}_{\e}}=\rho$;
\item there exists $e\in H^{s}_{\e}$ with $\|e\|_{H^{s}_{\e}}>\rho$ such that $J_{\e}(e)<0$.
\end{compactenum}
\end{lem}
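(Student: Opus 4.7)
The plan is to verify the three conditions of the mountain pass geometry directly from the structure of $g$ and the already-stated properties $(g_1)$--$(g_4)$ (which encode $(f_1)$--$(f_3)$). Item (i) is immediate since $G(\e x, 0) = 0$. The substance of the lemma is in (ii) and (iii).

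For (ii), I would first combine $(g_1)$ with $(g_2)$ and $(f_2)$ to get, for any $\xi > 0$, a constant $C_\xi > 0$ with
\[
g(x, t) \le \xi\, |t| + C_\xi |t|^{q-1}, \qquad G(x, t) \le \tfrac{\xi}{2} t^2 + \tfrac{C_\xi}{q} |t|^{q},
\]
uniformly in $x \in \R^N$, where $q \in (2, 2^*_s)$. Integrating and using $(V_1)$ to bound $\|u\|_{L^2}^2 \le V_0^{-1}\|u\|^2_{H^s_\e}$, together with the continuous embedding $H^s_\e \hookrightarrow H^s(\R^N) \hookrightarrow L^q(\R^N)$ furnished by Theorem \ref{Sembedding}, yields
\[
J_\e(u) \ge \tfrac{1}{2}\Bigl(1 - \tfrac{\xi}{V_0}\Bigr)\|u\|^2_{H^s_\e} - C\, \|u\|^{q}_{H^s_\e}.
\]
Choosing $\xi < V_0$ one has $J_\e(u) \ge C_1 \|u\|^2_{H^s_\e} - C_2 \|u\|^q_{H^s_\e}$; since $q > 2$, taking $\rho > 0$ sufficiently small gives $\alpha := C_1 \rho^2 - C_2 \rho^q > 0$, as required.

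For (iii), the standard approach via the Ambrosetti--Rabinowitz condition would integrate $(f_3)$ to get $F(t) \ge C_3 t^\theta - C_4$ for all $t \ge 0$, but one must be careful because $(g_4)$ only gives a quadratic estimate on $\R^N \setminus \Lambda$, so we cannot exploit superquadratic growth there. To bypass this, I would fix any non-negative, non-trivial $\varphi \in C_0^\infty(\R^N)$ with $\supp \varphi \subset \Lambda_\e = \e^{-1}\Lambda$ (possible since $\Lambda$ has non-empty interior, as it contains points where $V = V_0 < \min_{\partial \Lambda} V$). On $\supp \varphi$ one has $\e x \in \Lambda$, hence $G(\e x, t\varphi(x)) = F(t\varphi(x))$, and the AR-type lower bound applies there. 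Thus
\[
J_\e(t\varphi) \le \tfrac{t^2}{2}\|\varphi\|^2_{H^s_\e} - C_3\, t^\theta \int_{\supp \varphi} \varphi^\theta\, dx + C_4\, |\supp \varphi|,
\]
and since $\theta > 2$ the right-hand side tends to $-\infty$ as $t \to +\infty$. Pick $t_0$ large enough that $\|t_0\varphi\|_{H^s_\e} > \rho$ and $J_\e(t_0\varphi) < 0$, and set $e := t_0 \varphi$.

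There is no real obstacle in this lemma: both routine Sobolev/embedding estimates and the AR-superquadratic bound are standard. The only subtlety worth flagging is the one just mentioned, namely that the test function witnessing (iii) must be chosen with support inside $\Lambda_\e$ so that the penalization does not destroy the superquadratic growth needed to push $J_\e$ below zero.
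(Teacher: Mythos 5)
Your proof is correct and follows essentially the argument the paper invokes (it only sketches this lemma by reference to \cite{AM}): the small-norm estimate via $(g_1)$--$(g_2)$ and the Sobolev embedding, and the unboundedness below along $t\varphi$ with $\supp\varphi\subset\Lambda_{\e}$ so that $G=F$ there and the Ambrosetti--Rabinowitz bound applies, exactly as in the estimates the paper itself uses in the proof of Lemma \ref{lemz1}. Your flagged subtlety about choosing the test function inside $\Lambda_{\e}$ is precisely the right point, so nothing further is needed.
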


\begin{lem}\label{PSc}
Let $c\in \R$. Then, $J_{\e}$ satisfies the Palais-Smale condition at the level $c$.
\end{lem}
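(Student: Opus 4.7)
The plan is to carry out the standard three-step argument for Palais-Smale sequences of Schr\"odinger-type functionals, with the extra care required by the nonlocality of $(-\Delta)^{s}$. Let $(u_n)\subset H^{s}_{\e}$ satisfy $J_{\e}(u_n)\to c$ and $J_{\e}'(u_n)\to 0$ in $(H^{s}_{\e})^{*}$. First I would show boundedness. The idea is to combine
$$J_{\e}(u_n)-\frac{1}{\theta}\langle J_{\e}'(u_n),u_n\rangle = \left(\frac{1}{2}-\frac{1}{\theta}\right)\|u_n\|^{2}_{H^{s}_{\e}} + \int_{\R^{N}}\left(\frac{1}{\theta}g(\e x,u_n)u_n-G(\e x,u_n)\right)dx.$$
Splitting the integral between $\Lambda_{\e}$ and its complement, property $(g_3)$ makes the inner contribution nonnegative, while $(g_4)$ gives the lower bound $-(\frac{1}{2}-\frac{1}{\theta})\frac{V_0}{K}\int u_n^{2}\,dx$ on the complement. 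Since $\frac{V_0}{K}u_n^{2}\leq \frac{1}{K}V(\e x)u_n^{2}$, the choice $K>\frac{2\theta}{\theta-2}$ ensures that the resulting coefficient $(\frac{1}{2}-\frac{1}{\theta})(1-\frac{1}{K})>0$, so that $\|u_n\|_{H^{s}_{\e}}$ stays bounded.

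Next, I would pass to a subsequence with $u_n\rightharpoonup u$ in $H^{s}_{\e}$ and $u_n\to u$ a.e. and in $L^{q}_{\mathrm{loc}}(\R^{N})$ for $q\in[2,2^{*}_{s})$ by Theorem 2.1. Testing $J_{\e}'(u_n)$ against $\phi\in C^{\infty}_{c}(\R^{N})$ and using the subcritical growth bound $g(x,t)\leq \delta|t|+C_{\delta}|t|^{q-1}$ (from $(g_1)$ and $(g_2)$) together with local convergence, the nonlinear term passes to the limit, giving $J_{\e}'(u)=0$.

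The main step, and the place where nonlocality bites, is strong convergence $u_n\to u$ in $H^{s}_{\e}$. My plan is to prove tightness: given $\eta>0$, find $R>0$ with $\limsup_n\|u_n\|_{H^{s}_{\e}(\R^{N}\setminus B_R)}<\eta$. I would pick a smooth cut-off $\eta_R$ equal to $0$ on $B_R$, equal to $1$ outside $B_{2R}$, with $\|\nabla\eta_R\|_{\infty}\leq C/R$, and choose $R$ large enough so that $\Lambda_{\e}\subset B_R$; then $g(\e x,u_n)u_n\leq \frac{V_0}{K}u_n^{2}$ on $\operatorname{supp}\eta_R$ by $(g_4)$. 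Testing $J_{\e}'(u_n)$ against $\eta_R u_n$ (which is bounded in $H^{s}_{\e}$, so produces $o_n(1)$), expanding via
$$\eta_R(x)u_n(x)-\eta_R(y)u_n(y)=\eta_R(x)(u_n(x)-u_n(y))+u_n(y)(\eta_R(x)-\eta_R(y)),$$
and using $(1-\frac{1}{K})V(\e x)\eta_R u_n^{2}\geq 0$, I would arrive at an estimate of the form
$$\left(1-\frac{1}{K}\right)\int_{\R^{N}}V(\e x)\eta_R u_n^{2}\,dx + \iint_{\R^{2N}}\frac{\eta_R(x)(u_n(x)-u_n(y))^{2}}{|x-y|^{N+2s}}\,dx\,dy \leq o_n(1)+|E_{n,R}|,$$
where $E_{n,R}$ is the nonlocal cross term involving $\eta_R(x)-\eta_R(y)$.

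The main obstacle, expected to be the trickiest part of the argument, is to show that $\lim_{R\to\infty}\limsup_n|E_{n,R}|=0$. This is the fractional analogue of handling the gradient of a cut-off in the local case, but here one must use Cauchy-Schwarz against the Gagliardo seminorm together with $|\eta_R(x)-\eta_R(y)|\leq C\min\{1,|x-y|/R\}$ and the decay of the kernel outside the diagonal, exploiting uniform $L^{2}$-boundedness of $(u_n)$. Once tightness is established, the compact embedding on balls upgrades local $L^{q}$-convergence to global convergence $\int g(\e x,u_n)u_n\to\int g(\e x,u)u$; combining with $J_{\e}'(u_n)\to 0$ and $J_{\e}'(u)=0$ yields $\|u_n\|^{2}_{H^{s}_{\e}}\to\|u\|^{2}_{H^{s}_{\e}}$, which together with weak convergence in the Hilbert space $H^{s}_{\e}$ gives the desired strong convergence.
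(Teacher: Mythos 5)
Your argument is correct, and the skeleton (boundedness via $J_{\e}-\frac{1}{\theta}J'_{\e}$ with the $\Lambda_{\e}$ / $\R^{N}\setminus\Lambda_{\e}$ splitting, a del Pino--Felmer tail estimate with a cut-off, then strong convergence from norm convergence plus weak convergence) is exactly the one this lemma rests on. The difference is in the framework: the paper gives no proof at all and delegates to Alves--Miyagaki \cite{AM}, where the Palais--Smale property is established for the \emph{extended} functional obtained via the Caffarelli--Silvestre harmonic extension, so the cut-off argument is local (only $\nabla\eta_{R}$ appears) in the weighted half-space $\R^{N+1}_{+}$; you instead work intrinsically with the Gagliardo seminorm, and the price is precisely the nonlocal cross term $E_{n,R}$ that you single out. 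That term is not really an obstacle: by Cauchy--Schwarz, $|E_{n,R}|\leq [u_{n}]\left(\iint_{\R^{2N}}\frac{|\eta_{R}(x)-\eta_{R}(y)|^{2}}{|x-y|^{N+2s}}u_{n}^{2}(y)\,dx\,dy\right)^{1/2}$, and for fixed $y$ the inner integral is at most $C\left(R^{-2}\int_{|x-y|\leq R}|x-y|^{2-N-2s}dx+\int_{|x-y|>R}|x-y|^{-N-2s}dx\right)\leq CR^{-2s}$, so $|E_{n,R}|\leq CR^{-s}$ uniformly in $n$, since $(V_1)$ gives a uniform $L^{2}$ bound from the $H^{s}_{\e}$ bound. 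Your boundedness step is the same computation the paper carries out explicitly at the start of Lemma 3.4, and the final passage (convergence of $\int g(\e x,u_{n})u_{n}$ from local compactness plus the tail bound $g(\e x,t)t\leq\frac{1}{K}V(\e x)t^{2}$ outside $\Lambda_{\e}$, then norm convergence in the Hilbert space) is standard and sound; note also that only $K>2$, not the full $K>\frac{2\theta}{\theta-2}$, is needed for the coercivity coefficient. In short: correct proof, intrinsically fractional rather than extension-based; yours avoids the trace/extension machinery of \cite{AM} at the cost of the (elementary) cross-term estimate, while the extension route lets one copy the classical local cut-off computations verbatim.
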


\noindent
Taking into account Lemma \ref{MPG} and Lemma \ref{PSc}, we can define the mountain pass level
$$
c_{\e}=\inf_{\gamma\in \Gamma_{\e}} \max_{t\in [0, 1]} J_{\e}(\gamma(t))
$$
where
$$
\Gamma_{\e}=\{\gamma\in C([0, 1], H^{s}_{\e}): \gamma(0)=0 \mbox{ and } \gamma(1)=e\},
$$
and to deduce that there exists $u_{\e}\in H^{s}_{\e}\setminus\{0\}$ such that $J_{\e}(u_{\e})=c_{\e}$ and $J'_{\e}(u_{\e})=0$.\\
As in \cite{FS}, one can prove that also $J_{0}$ has a mountain pass geometry, so we denote by $c_{V_{0}}$ the mountain pass level associated to $J_{0}$.\\
Now, let us introduce the Nehari manifold associated to (\ref{Pe}), that is
\begin{equation*}
\mathcal{N}_{\e}:= \{u\in H^{s}_{\e} \setminus \{0\} : \langle J_{\e}'(u), u \rangle =0\}.
\end{equation*}
It is easy to check that there exists $r>0$ such that $\|u\|_{H^{s}_{\e}}\geq r$ for all $u\in \mathcal{N}_{\e}$ and $\e>0$.\\
Let us denote 
$$
H_{\e}^{+}=\{u\in H^{s}_{\e}: |\supp(u^{+})\cap \Lambda_{\e}|>0\},
$$
and $\mathbb{S}_{\e}^{+}=\mathbb{S}_{\e}\cap H_{\e}^{+}$, where $\mathbb{S}_{\e}$ is the unitary sphere in $H^{s}_{\e}$.

We note that $S_{\e}^{+}$ is a not complete $C^{1, 1}$ manifold of codimension $1$ \cite{SW}, so $H^{s}_{\e}=T_{u}\mathbb{S}_{\e}^{+}\oplus \R u$ for all $u\in T_{u}\mathbb{S}_{\e}^{+}=\{v\in H^{s}_{\e}: (u, v)_{H^{s}_{\e}}=0\}$.\\
Now, we prove the following lemma  which will be fundamental to deduce our main result.
\begin{lem}\label{lemz1}
Suppose that $V$ satisfies $(V_1)$-$(V_2)$ and $f$ verifies $(f_1)$-$(f_4)$. \\
Then, the following facts hold true:
\begin{compactenum}
\item[$(a)$] For any $u\in H_{\e}^{+}$, let $h_{u}: \R_{+} \rightarrow \R$ be defined by $h_{u}(t):= J_{\e}(tu)$. Then, there is a unique $t_{u}>0$ such that $h_{u}'(t)>0$ in $(0, t_{u})$ and $h_{u}'(t)<0$ in $(t_{u}, +\infty)$.
\item[$(b)$] There is $\tau>0$, independent on $u$, such that $t_{u}\geq \tau$ for every $u\in \mathbb{S}_{\e}^{+}$. Moreover, for each compact set $\mathcal{W}\subset \mathbb{S}_{\e}^{+}$, there is $C_{\mathcal{W}}>0$ such that $t_{u}\leq C_{\mathcal{W}}$ for every $u\in \mathcal{W}$.
\item[$(c)$] The map $\hat{m}_{\e}: H_{\e}^{+}\rightarrow \mathcal{N}_{\e}$ given by $\hat{m}_{\e}(u):=t_{u}u$ is continuous and $m_{\e}:= \hat{m}|_{\mathbb{S}_{\e}^{+}}$ is a homeomorphism between $\mathbb{S}^{+}_{\e}$ and $\mathcal{N}_{\e}$. Moreover, $m^{-1}_{\e}(u)= \frac{u}{\|u\|_{H^{s}_{\e}}}$.
\item[$(d)$] If there is a sequence $(u_{n})\subset \mathbb{S}^{+}_{\e}$ such that $dist(u_{n}, \partial \mathbb{S}_{\e}^{+})\rightarrow 0$, then $\|m_{\e}(u_{n})\|_{H^{s}_{\e}}\rightarrow \infty$ and $J_{\e}(m_{\e}(u_{n}))\rightarrow \infty$. 
\end{compactenum}
\end{lem}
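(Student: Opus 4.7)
The plan is to adapt the Szulkin--Weth framework for Nehari manifolds of merely continuous, superlinear functionals (see \cite{SW}). A preliminary observation that I will use throughout is the following consequence of $(f_{4})$ and the choice $f(a)/a = V_{0}/K$: since $t \mapsto f(t)/t$ is increasing, $\tilde{f}(t) \leq (V_{0}/K) t$ for every $t \geq 0$, hence $g(x, t) t \leq (V_{0}/K) t^{2}$ on $(\R^{N}\setminus \Lambda) \times [0, \infty)$. The factor $1/K < 1$ is what drives parts $(c)$ and $(d)$.

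For $(a)$, a direct computation gives $h_{u}'(t) = t\|u\|_{H^{s}_{\e}}^{2} - \int_{\R^{N}} g(\e x, tu) u\, dx$. The subcritical estimate $g(x, s) \leq \sigma s + C_{\sigma} s^{q-1}$ coming from $(g_{1})$--$(g_{2})$ and the Sobolev embedding gives $h_{u}'(t) > 0$ for $t$ small, while $(g_{3})$ applied on the set of positive measure $\{u > 0\} \cap \Lambda_{\e}$ (nonempty because $u \in H_{\e}^{+}$) forces $h_{u}(t) \to -\infty$. For uniqueness I divide $h_{u}'(t)$ by $t$: the map $\tau \mapsto g(\e x, \tau u(x))/(\tau u(x))$ is strictly increasing in $\tau$ on $\{u > 0\} \cap \Lambda_{\e}$ and nondecreasing on $\{u > 0\} \setminus \Lambda_{\e}$ by $(f_{4})$, so $h_{u}'(t)/t$ is strictly decreasing in $t$, which pins down $t_{u}$.

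For $(b)$, the lower bound $\tau > 0$ follows by substituting $\|u\|_{H^{s}_{\e}} = 1$ into $t_{u} = \int g(\e x, t_{u} u) u\, dx \leq \sigma t_{u}\|u\|_{L^{2}}^{2} + C_{\sigma} t_{u}^{q-1}\|u\|_{L^{q}}^{q}$, which yields a bound of the form $t_{u}^{q-2} \geq c$. The upper bound on a compact set $\mathcal{W} \subset \mathbb{S}_{\e}^{+}$ is by contradiction: if $u_{n} \to u$ in $\mathcal{W}$ with $t_{u_{n}} \to \infty$, the Ambrosetti--Rabinowitz inequality and the compact embedding $H^{s} \hookrightarrow L^{\theta}_{loc}$ give $\int G(\e x, t_{u_{n}} u_{n})\, dx \geq c\, t_{u_{n}}^{\theta} - O(1)$ on $\{u > 0\}\cap \Lambda_{\e}$, forcing $J_{\e}(t_{u_{n}} u_{n}) \to -\infty$; this contradicts $J_{\e}(t_{u_{n}} u_{n}) = \max_{t \geq 0} J_{\e}(tu_{n}) \geq J_{\e}(0) = 0$. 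Part $(c)$ is then a standard consequence of $(a)$ and $(b)$ in the Szulkin--Weth setup; the only penalization-specific point is the inclusion $\mathcal{N}_{\e} \subset H_{\e}^{+}$, needed so that $w \mapsto w/\|w\|_{H^{s}_{\e}}$ actually maps $\mathcal{N}_{\e}$ into $\mathbb{S}_{\e}^{+}$. If $w \in \mathcal{N}_{\e}$ had $w^{+} \equiv 0$ on $\Lambda_{\e}$, then
$$
\|w\|_{H^{s}_{\e}}^{2} = \int_{\R^{N} \setminus \Lambda_{\e}} \tilde{f}(w^{+}) w^{+}\, dx \leq \frac{V_{0}}{K}\|w\|_{L^{2}}^{2} \leq \frac{1}{K}\|w\|_{H^{s}_{\e}}^{2},
$$
which is impossible since $K > 1$.

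For $(d)$, pick $v_{n} \in \partial \mathbb{S}_{\e}^{+}$ with $\|u_{n} - v_{n}\|_{H^{s}_{\e}} \to 0$; since $v_{n}^{+} \equiv 0$ on $\Lambda_{\e}$ and $\Lambda_{\e}$ is bounded, the compact embedding $H^{s} \hookrightarrow L^{r}_{loc}$ gives $u_{n}^{+} \to 0$ in $L^{r}(\Lambda_{\e})$ for every $r \in [2, 2^{*}_{s})$. Combined with $(g_{1})$--$(g_{2})$ and dominated convergence this yields $\int_{\Lambda_{\e}} G(\e x, T u_{n})\, dx \to 0$ for each fixed $T > 0$, while the sub-quadratic bound $G(x, t) \leq (V_{0}/(2K)) t^{2}$ from $(g_{4})$ on $\R^{N} \setminus \Lambda_{\e}$ gives
$$
J_{\e}(Tu_{n}) \geq \frac{T^{2}}{2} - \frac{T^{2}}{2K}\int_{\R^{N}} V(\e x) u_{n}^{2}\, dx - o(1) \geq \frac{T^{2}}{2}\left(1 - \frac{1}{K}\right) - o(1).
$$
Since $J_{\e}(m_{\e}(u_{n})) = \max_{t \geq 0} J_{\e}(t u_{n}) \geq J_{\e}(Tu_{n})$ and $T$ is arbitrary, both $J_{\e}(m_{\e}(u_{n}))$ and $\|m_{\e}(u_{n})\|_{H^{s}_{\e}} = t_{u_{n}}$ must diverge. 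The main technical obstacle I anticipate is precisely this careful bookkeeping between $\Lambda_{\e}$, where the full strength of $f$ (in particular $(f_{3})$ and strict monotonicity in $(f_{4})$) is used, and its complement, where only the penalized sub-quadratic estimate $(g_{4})$ is available; the argument closes because $K > 2\theta/(\theta - 2) > 2$ has been chosen large enough.
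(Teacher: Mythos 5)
Your argument is correct and follows essentially the same route as the paper: the small-$t$/large-$t$ estimates from $(g_1)$--$(g_3)$ with strict monotonicity from $(f_4)$ for $(a)$, the Nehari identity plus compactness for $(b)$, the inclusion $\mathcal{N}_{\e}\subset H^{+}_{\e}$ via the $\frac{1}{K}$-bound for $(c)$, and the Szulkin--Weth boundary estimate $u_{n}^{+}\leq|u_{n}-v|$ on $\Lambda_{\e}$ together with $(g_4)$ for $(d)$. The only cosmetic difference is that in $(d)$ you invoke a ``compact embedding'' where the pointwise comparison with $v_{n}\in\partial\mathbb{S}_{\e}^{+}$ and the continuous embedding $H^{s}_{\e}\hookrightarrow L^{r}(\R^{N})$ already suffice, exactly as in the paper's computation.
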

\begin{proof}
By using $(g_1)$-$(g_2)$, and Theorem \ref{Sembedding}, we can see that for any $u\in H^{+}_{\e}$ and $t>0$
$$
J_{\e}(tu)\geq C_{1}t^{2}\|u\|_{H^{s}_{\e}}^{2}-C_{2}t^{q}\|u\|^{q}_{H^{s}_{\e}}.
$$ 
On the other hand, by using $(g_3)$, we get for any $u\in H^{+}_{\e}$ and $t>0$
$$
J_{\e}(t u)\leq \frac{t^{2}}{2}\|u\|^{2}_{H^{s}_{\e}}-C_{1}t^{\theta}\|u\|^{\theta}_{L^{\theta}(\Lambda_{\e})}+C_{2}|\supp(u^{+})\cap \Lambda_{\e}|.
$$

Then, by the continuity of $h_{u}$, it is easy to see that there exists $t_{u}>0$ such that $\max_{t\geq 0} h_{u}(t)=h_{u}(t_{u})$, $t_{u}u\in \mathcal{N}_{\e}$ and $h_{u}'(t_{u})=0$. The uniqueness of a such $t_{u}$, follows by the assumption $(f_4)$ and by the definition of $g$. Therefore, condition $(a)$ is verified. Concerning $(b)$, we note that for any $u\in \mathbb{S}_{\e}^{+}$, from $h'_{u}(t_{u})=0$, $(g_1)$-$(g_2)$, and Theorem \ref{Sembedding}, we get
$$
t_{u}=\int_{\R^{N}} g(\e x, t_{u} u)u\, dx\leq \e C t_{u}^{2}+C_{\e}t_{u}^{q}
$$
so there exists $\tau>0$ independent of $u$, such that $t_{u}\geq \tau$.
Now, by using $(g_3)$ and $(g_4)$, we can observe that 
\begin{equation}\label{absurd}
J_{\e}(v)=J_{\e}(v)-\frac{1}{\theta}\langle J'_{\e}(v), v\rangle\geq C\|v\|_{H^{s}_{\e}}^{2} \mbox{ for any } v\in \mathcal{N}_{\e}.
\end{equation}
Hence, if there exists $(u_{n})\subset \mathcal{W}$ such that $t_{u_{n}}\rightarrow \infty$, by the compactness of $\mathcal{W}$, it follows that $u_{n}\rightarrow w$ in $H^{s}_{\e}$, and $J_{\e}(t_{n}u_{n})\rightarrow -\infty$. Taking $v_{n}=t_{n}u_{n}\in \mathcal{N}_{\e}$ in (\ref{absurd}), we can see that $\lim_{n\rightarrow \infty} J_{\e}(t_{n}u_{n})>0$, which gives a contradiction. Regarding $(c)$, we first note that $\hat{m}_{\e}$, $m_{\e}$ and $m_{\e}^{-1}$ are well defined. In fact, in view of $(a)$, for any $u\in H^{s}_{+}$ there exists a unique $\hat{m}_{\e}(u)\in \mathcal{N}_{\e}$. Moreover, if $u\in \mathcal{N}_{\e}\setminus H^{+}_{\e}$, then from $(g_4)$, we deduce that
$$
\|u\|_{H^{s}_{\e}}^{2}=\int_{\R^{N}} g(\e x, u)u\, dx=\int_{\R^{N}\setminus \Lambda_{\e}} g(\e x, u^{+})u^{+}\, dx\leq \frac{1}{K}\|u\|_{H^{s}_{\e}}^{2}
$$
which gives a contradiction because $K>2$ and $u\neq 0$. Therefore $m_{\e}^{-1}(u)=u/\|u\|_{H^{s}_{\e}}$ is well defined.
Moreover, since
$$
m^{-1}_{\e}(m_{\e}(u))=m_{\e}^{-1}(t_{u} u)=\frac{t_{u}u}{t_{u}\|u\|_{H^{s}_{\e}}}=u \quad \forall u\in H^{+}_{\e}
$$
and
$$
m_{\e}(m^{-1}_{\e}(u))=m_{\e}\left(\frac{u}{\|u\|_{H^{s}_{\e}}}\right)=t_{\frac{u}{\|u\|_{H^{s}_{\e}}}}\frac{u}{\|u\|_{H^{s}_{\e}}}=u \quad \forall u\in \mathcal{N}_{\e},
$$
we can see that $m_{\e}$ is bijective and and $m^{-1}_{\e}$ is continuous. In order to show that $\hat{m}_{\e}: H^{+}_{\e}\rightarrow \mathcal{N}_{\e}$ is continuous, let $(u_{n})\subset H^{+}_{\e}$ and $u\in H^{+}_{\e}$ such that $u_{n}\rightarrow u$ in $H^{s}_{\e}$. By using $(b)$, there exists $t_{0}>0$ such that $t_{u_{n}}\rightarrow t_{0}$. Since $t_{u_{n}}u_{n}\in \mathcal{N}_{\e}$, we can see that
$$
t_{0}^{2}\|u\|^{2}_{H^{s}_{\e}}=\lim_{n\rightarrow \infty} t_{u_{n}}^{2}\|u_{n}\|^{2}_{H^{s}_{\e}}=\lim_{n\rightarrow \infty} \int_{\R^{N}} g(\e x, t_{u_{n}} u_{n}) t_{u_{n}} u_{n}\, dx=\int_{\R^{N}} g(\e x, t_{0} u) t_{0} u\, dx
$$
which implies that $t_{0}u\in \mathcal{N}_{0}$ and $t_{u}=t_{0}$. This ends the proof of the continuity of $\hat{m}_{\e}$.
Finally, we prove $(d)$. We proceed as in Lemma $26$ in \cite{SW}. Let $(u_{n})\subset \mathbb{S}^{+}_{\e}$ such that $dist(u_{n}, \partial \mathbb{S}_{\e}^{+})\rightarrow 0$. \\
Then 
\begin{equation}\label{SW}
u_{n}^{+}\leq |u_{n}-v| \mbox{ a.e. in } \Lambda_{\e}, \mbox{ for any } v\in \partial \mathbb{S}_{\e}^{+}.
\end{equation}
Taking into account $(g_2)$, $(g_3)$, $(g_4)$, Theorem \ref{Sembedding}, and $(\ref{SW})$, we can deduce that
\begin{align*}
\int_{\R^{N}} G(\e x, t u_{n}) \, dx&\leq \int_{\Lambda_{\e}} F(t u_{n})\, dx+\frac{t^{2}}{K}\int_{\R^{N}\setminus \Lambda_{\e}} V(\e x) u_{n}^{2}\, dx \\
&\leq C_{1} t^{2} \|u_{n}^{+}\|^{2}_{L^{2}(\Lambda_{\e})}+C_{2}t^{q}  \|u_{n}^{+}\|^{q}_{L^{q}(\Lambda_{\e})}+\frac{t^{2}}{K}\|u_{n}\|^{2}_{H^{s}_{\e}} \\
&\leq C t^{2} dist(u_{n}, \partial \mathbb{S}_{\e}^{+})^{2}+C_{q} t^{q} dist(u_{n}, \partial \mathbb{S}_{\e}^{+})^{q}+\frac{t^{2}}{K}
\end{align*}
which gives
$$
\limsup_{n\rightarrow \infty} G(\e x, t u_{n})\, dx\leq \frac{t^{2}}{K}
$$
for any $t>0$.
This and the definition of $m_{\e}$, yield for any $t>0$
\begin{align}
\frac{1}{2}\liminf_{n\rightarrow \infty} \|m_{\e}(u_{n})\|^{2}_{H^{s}_{\e}}\geq \liminf_{n\rightarrow \infty} J_{\e}(m_{\e}(u_{n}))\geq \liminf_{n\rightarrow \infty} J_{\e}(t u_{n})\geq \liminf_{n\rightarrow \infty} \frac{t^{2}}{2} \|u_{n}\|^{2}_{H^{s}_{\e}}-\frac{t^{2}}{K}\geq t^{2}\left(\frac{1}{2}-\frac{1}{K}\right)
\end{align}
which implies that $\|m_{\e}(u_{n})\|_{H^{s}_{\e}}\rightarrow \infty$ and $J_{\e}(m_{\e}(u_{n}))\rightarrow \infty$, being $K>2$.

\end{proof}

\noindent
Let us define the maps $\hat{\psi}_{\e}: H^{+}_{\e} \rightarrow \R$ by $\hat{\psi}(u):= J_{\e}(\hat{m}_{\e}(u))$, and $\psi:=\hat{\psi}|_{\mathbb{S}_{\e}^{+}}$. 
The next result is a consequence of Lemma \ref{lemz1}. For more details, see \cite{SW}.
\begin{prop}\label{propz2}
Suppose that $V$ satisfies $(V_1)$-$(V_2)$ and $f$ verifies $(f_1)$-$(f_4)$.. Then, one has:
\begin{compactenum}
\item[$(a)$] $\hat{\psi}_{\e}\in C^{1}(H^{+}_{\e}, \R)$ and
\begin{equation*}
\langle \hat{\psi}_{\e}'(u), v \rangle=\frac{\|\hat{m}_{\e}(u)\|_{H^{s}_{\e}}}{\|u\|_{H^{s}_{\e}}} \langle J_{\e}'(\hat{m}_{\e}(u)), v \rangle \,,
\end{equation*}
for every $u\in H^{+}_{\e}$ and $v\in H^{s}_{\e}$;
\item[$(b)$] $\psi \in C^{1}(\mathbb{S}^{+}_{\e}, \R)$ and $\langle \psi'_{\e}(u), v \rangle = \|m_{\e}(u)\|_{H^{s}_{\e}} \langle J_{\e}'(m_{\e}(u)), v \rangle$, for every $v\in T_{u}\mathbb{S}_{\e}^{+}$.
\item[$(c)$] If $(u_{n})$ is a $(PS)_{d}$ sequence for $\psi_{\e}$, then $(m_{\e}(u_{n}))$ is a $(PS)_{d}$ sequence for $J_{\e}$. Moreover, if $(u_{n})\subset \mathcal{N}_{\e}$ is a bounded $(PS)_{d}$ sequence for $J_{\e}$, then $(m_{\e}^{-1} (u_{n}))$ is a $(PS)_{d}$ sequence for the functional $\psi_{\e}$;
\item[$(d)$] $u$ is a critical point of $\psi_{\e}$ if and only if $m_{\e}(u)$ is a nontrivial critical point for $J_{\e}$. Moreover, the corresponding critical values coincide and
\begin{equation*}
\inf_{u\in\mathbb{S}^{+}_{\e}} \psi_{\e}(u) = \inf_{u\in\mathcal{N}_{\e}} J_{\e}(u).
\end{equation*}
\end{compactenum}
\end{prop}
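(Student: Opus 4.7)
The plan is to adapt the abstract framework of Szulkin--Weth \cite{SW}, which was designed precisely to handle Nehari manifold techniques when the nonlinearity is only continuous and $\mathcal{N}_{\e}$ is therefore not of class $C^{1}$. Item $(a)$ is the crux of the proposition; once it is in hand, items $(b)$, $(c)$, $(d)$ follow by manipulations exploiting the splitting $H^{s}_{\e}=T_{u}\mathbb{S}_{\e}^{+}\oplus \R u$ together with the identity $\langle J'_{\e}(v),v\rangle=0$ on $\mathcal{N}_{\e}$.

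For $(a)$, the delicate point is to establish differentiability of $\hat{\psi}_{\e}$ without differentiating the implicit map $u\mapsto t_{u}$, which we cannot do directly since $f$ is only continuous. The idea is to run an envelope-type sandwich: fixing $u\in H^{+}_{\e}$ and $v\in H^{s}_{\e}$, the maximality property of $t_{u}$ and $t_{u+sv}$ stated in Lemma \ref{lemz1}$(a)$ yields, for $|s|$ small,
\begin{align*}
J_{\e}(t_{u}(u+sv))-J_{\e}(t_{u}u) \;\leq\; \hat{\psi}_{\e}(u+sv)-\hat{\psi}_{\e}(u) \;\leq\; J_{\e}(t_{u+sv}(u+sv))-J_{\e}(t_{u+sv}u).
\end{align*}
A first-order expansion of the two outer expressions, combined with the convergence $t_{u+sv}\to t_{u}$ coming from Lemma \ref{lemz1}$(c)$, sandwiches the difference quotient and gives the G\^ateaux derivative
\[
\langle \hat{\psi}'_{\e}(u),v\rangle = t_{u}\langle J'_{\e}(t_{u}u),v\rangle = \frac{\|\hat{m}_{\e}(u)\|_{H^{s}_{\e}}}{\|u\|_{H^{s}_{\e}}}\langle J'_{\e}(\hat{m}_{\e}(u)),v\rangle.
\]
The map $u\mapsto \hat{\psi}'_{\e}(u)$ is then norm-continuous, again via continuity of $\hat{m}_{\e}$ and of $J'_{\e}$, so $\hat{\psi}_{\e}\in C^{1}(H^{+}_{\e},\R)$.

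Item $(b)$ is the restriction of $(a)$ to $\mathbb{S}_{\e}^{+}$: on the unit sphere we have $\|u\|_{H^{s}_{\e}}=1$ and $t_{u}=\|m_{\e}(u)\|_{H^{s}_{\e}}$, so the formula of $(a)$ collapses to the claimed one for $v\in T_{u}\mathbb{S}_{\e}^{+}$. For $(c)$, I would decompose any $w\in H^{s}_{\e}$ as $w=v+\lambda u_{n}$ with $v\in T_{u_{n}}\mathbb{S}_{\e}^{+}$; since $m_{\e}(u_{n})\in \mathcal{N}_{\e}$ forces $\langle J'_{\e}(m_{\e}(u_{n})),u_{n}\rangle=0$, a bound on $\psi'_{\e}(u_{n})$ transfers to a bound on $J'_{\e}(m_{\e}(u_{n}))$, using the uniform lower bound $\|m_{\e}(u_{n})\|_{H^{s}_{\e}}\geq r>0$ valid on $\mathcal{N}_{\e}$. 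The converse direction uses the assumed boundedness in $H^{s}_{\e}$. Finally, $(d)$ is immediate from $(b)$: at a critical point of $\psi_{\e}$ the derivative vanishes on $T_{u}\mathbb{S}_{\e}^{+}$, and automatically on $\R u$ because $m_{\e}(u)\in \mathcal{N}_{\e}$; the equality of infima is a direct consequence of the bijection $m_{\e}:\mathbb{S}^{+}_{\e}\to \mathcal{N}_{\e}$ established in Lemma \ref{lemz1}$(c)$.

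The main obstacle is making the sandwich argument in $(a)$ fully rigorous: one has to know that $t_{u+sv}$ stays bounded and bounded away from zero as $s\to 0$, locally uniformly in $v$, in order to Taylor-expand the upper bound and match it with the lower one. This is precisely where the uniform estimate $t_{u}\geq \tau$ and the local upper bound on compacts from Lemma \ref{lemz1}$(b)$, together with continuity of $\hat{m}_{\e}$, enter decisively.
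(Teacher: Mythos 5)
Your proposal is correct and follows essentially the same route as the paper, which gives no written proof but derives the proposition from Lemma \ref{lemz1} by citing the abstract Nehari-manifold framework of Szulkin--Weth \cite{SW}; your sandwich argument for $(a)$ and the subsequent deductions of $(b)$--$(d)$ are exactly the cited argument, with the continuity of $\hat{m}_{\e}$ and the bounds on $t_{u}$ from Lemma \ref{lemz1} entering where you indicate.
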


\noindent
Finally, we prove the following result:
\begin{lem}\label{lemma2.10}
The functional $\psi_{\e}$ satisfies the $(PS)_{c}$ on $\mathbb{S}_{\e}^{+}$ for any $c\in \R$.
\end{lem}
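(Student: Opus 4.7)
The strategy is to transport the problem to the Nehari manifold $\mathcal{N}_{\e}$ via the homeomorphism $m_{\e}$ and then invoke the already established Palais--Smale property of $J_{\e}$ from Lemma \ref{PSc}. Given a sequence $(u_{n})\subset \mathbb{S}_{\e}^{+}$ with $\psi_{\e}(u_{n})\to c$ and $\psi_{\e}'(u_{n})\to 0$ in $(T_{u_{n}}\mathbb{S}_{\e}^{+})^{*}$, Proposition \ref{propz2}(c) yields that $v_{n}:=m_{\e}(u_{n})\in\mathcal{N}_{\e}$ is a $(PS)_{c}$ sequence for $J_{\e}$ on $H^{s}_{\e}$. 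Then Lemma \ref{PSc} provides, up to a subsequence, a limit $v\in H^{s}_{\e}$ with $v_{n}\to v$ strongly; the uniform lower bound $\|v_{n}\|_{H^{s}_{\e}}\geq r>0$ valid on $\mathcal{N}_{\e}$ forces $v\neq 0$, and continuity of $J_{\e}'$ together with $J_{\e}'(v_{n})\to 0$ gives $J_{\e}'(v)=0$, so $v\in\mathcal{N}_{\e}$.

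The only delicate point is to make sure the limit of $(u_{n})$ actually lies inside the incomplete constraint manifold $\mathbb{S}_{\e}^{+}$ and not on its boundary. The key observation is that $dist(u_{n},\partial\mathbb{S}_{\e}^{+})$ must stay bounded away from zero: if along a subsequence $dist(u_{n},\partial\mathbb{S}_{\e}^{+})\to 0$, then Lemma \ref{lemz1}(d) would force $J_{\e}(m_{\e}(u_{n}))\to\infty$; but $J_{\e}(m_{\e}(u_{n}))=\hat{\psi}_{\e}(u_{n})=\psi_{\e}(u_{n})\to c$, a contradiction. Hence there exists $\delta>0$ such that, up to subsequences, $dist(u_{n},\partial\mathbb{S}_{\e}^{+})\geq \delta$ for all $n$.

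With this in hand, set $u:=v/\|v\|_{H^{s}_{\e}}$. By the continuity of $m_{\e}^{-1}$ asserted in Lemma \ref{lemz1}(c), together with $v_{n}\to v$ and $v\neq 0$, one obtains $u_{n}=m_{\e}^{-1}(v_{n})\to u$ in $H^{s}_{\e}$, so in particular $u\in\mathbb{S}_{\e}$. Passing to the limit in the uniform separation estimate gives $dist(u,\partial\mathbb{S}_{\e}^{+})\geq \delta>0$, which places $u$ inside $\mathbb{S}_{\e}^{+}$ and finishes the verification of $(PS)_{c}$. The only genuine obstacle is the one highlighted above, namely that $\mathbb{S}_{\e}^{+}$ is not a complete manifold; it is exactly Lemma \ref{lemz1}(d) (whose proof relied on the penalization property $(g_{4})$ and on $K>2$) that prevents a $(PS)_{c}$ sequence from escaping to the boundary, making the transfer of convergence through $m_{\e}$ and $m_{\e}^{-1}$ legitimate.
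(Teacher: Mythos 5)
Your proof is correct and follows essentially the same route as the paper: transfer the $(PS)_{c}$ sequence to $\mathcal{N}_{\e}$ via Proposition \ref{propz2}-(c), apply Lemma \ref{PSc} to get strong convergence of $m_{\e}(u_{n})$, and return through the homeomorphism $m_{\e}^{-1}$ of Lemma \ref{lemz1}-(c). Your additional verification that the limit stays away from $\partial \mathbb{S}_{\e}^{+}$, using Lemma \ref{lemz1}-(d) and the boundedness of $\psi_{\e}(u_{n})$, is a sound refinement of a point the paper leaves implicit.
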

\begin{proof}
Let $(u_{n})\subset \mathbb{S}_{\e}^{+}$ be a $(PS)_{c}$ sequence for $\psi_{\e}$. Then, by using Proposition \ref{propz2}-$(c)$, we can infer that $(m_{\e}(u_{n}))$ is a $(PS)_{c}$ sequence for $J_{\e}$. In view of Lemma \ref{PSc}, we can see that, up to a subsequence, $m_{\e}(u_{n})$ converges strongly in $H^{s}_{\e}$. The end of lemma follows by Lemma \ref{lemz1}-$(c)$.

\end{proof}

\begin{remark}
As in \cite{SW}, we have the following characterization of $J_{\e}$ on $\mathcal{N}_{\e}$:
$$
c_{\e}= \inf_{u\in\mathcal{N}_{\e}} J_{\e}(u)=\inf_{u\in H^{+}_{\e}} \max_{t>0} J_{\e}(t u)=\inf_{u\in \mathbb{S}^{+}_{\e}} \max_{t>0} J_{\e}(t u).
$$
\end{remark}
\begin{remark}\label{rem2}
With suitable modifications, the proofs of Lemma $\ref{lemz1}$ and Proposition $\ref{propz2}$ hold when we take $\e=0$, that is when we replace $J_{\e}$ and $\mathcal{N}_{\e}$ by $J_{0}$ and $\mathcal{N}_{0}$.
Therefore, we can deduce that 
$$
c_{V_{0}}= \inf_{u\in\mathcal{N}_{0}} J_{0}(u)=\inf_{u\in H^{+}_{0}} \max_{t>0} J_{0}(t u)=\inf_{u\in \mathbb{S}^{+}_{0}} \max_{t>0} J_{0}(t u),
$$
where $\mathcal{N}_{0}=\{u\in H^{s}_{0}\setminus\{0\}: \|u\|^{2}_{H^{s}_{0}}=\int_{\R^{N}} f(u)u\, dx\}$, $H^{+}_{0}=\{u\in H^{s}_{0}: |\supp(u^{+})|>0\}$, $\mathbb{S}_{0}^{+}=\mathbb{S}_{0}\cap H^{+}_{0}$, and $\mathbb{S}_{0}$ is the unit sphere in $H^{s}_{0}$.
\end{remark}

\section{Proof of Theorem \ref{thmf}}
In order to study the multiplicity of solutions to (\ref{P}), we need introduce some useful tools.\\
Fix $\delta>0$ such that $M_{\delta}\subset \Lambda$, where
$$
M_{\delta}=\{x\in \R^{N}: dist(x, M)\leq \delta\}.
$$
Let $w$ be a ground state solution for $J_{0}$ (see for instance \cite{A4, CW}), and, for any $z\in M$ we define
$$
\Psi_{\e, z}(x)=\eta(|\e x-z|) w\left(\frac{\e x-z}{\e}\right)
$$
with $\eta\in C^{\infty}_{0}(\R_{+}, [0, 1])$ satisfying $\eta(t)=1$ if $0\leq t\leq \frac{\delta}{2}$ and $\eta(t)=0$ if $t\geq \delta$.
Then, let $t_{\e}>0$ the unique positive number such that 
$$
\max_{t\geq 0} J_{\e}(t \Psi_{\e, z})=J_{\e}(t_{\e} \Psi_{\e, z}).
$$
Finally, we consider $\Phi_{\e}(z)=t_{\e} \Psi_{\e, z}$.

\begin{lem}\label{lemma3.4}
The functional $\Phi_{\e}$ satisfies the following limit
$$
\lim_{\e\rightarrow 0} J_{\e}(\Phi_{\e}(z))=c_{V_{0}} \mbox{ uniformly in } z\in M.
$$
\end{lem}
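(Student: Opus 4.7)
The plan is to reduce the problem, via the translation $y=x-z/\e$, to analyzing the $z$-independent profile $\tilde{\Psi}_{\e}(y):=\eta(\e|y|)w(y)$, and then to establish separately that (i) $\|\Psi_{\e,z}\|_{H^{s}_{\e}}^{2}\to \|w\|_{H^{s}_{0}}^{2}$ and (ii) the Nehari projector $t_{\e}\to 1$, from which $J_{\e}(\Phi_{\e}(z))\to J_{0}(w)=c_{V_{0}}$ follows at once. Uniformity in $z\in M$ is handled at the end by a standard sequential argument based on the compactness of $\overline{M}\subset M_{\delta}\subset \Lambda$: if uniform convergence failed, there would exist $\e_{n}\to 0$, $z_{n}\in M$, and $c_{0}>0$ with $|J_{\e_{n}}(\Phi_{\e_{n}}(z_{n}))-c_{V_{0}}|\ge c_{0}$; passing to a subsequence $z_{n}\to z_{0}\in \overline{M}$ and using the continuity of $V$ together with $V(z_{0})=V_{0}$ would reduce the contradiction to the pointwise statement at $z_0$, which I now sketch for a fixed $z\in M$.

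\emph{Norm convergence.} Translation invariance of the Gagliardo seminorm gives $[\Psi_{\e,z}]^{2}=[\tilde{\Psi}_{\e}]^{2}$, while the weighted $L^{2}$-term becomes $\int_{\R^{N}}V(\e y+z)\eta(\e|y|)^{2}w(y)^{2}\,dy$. The commutator-type estimate
\begin{equation*}
|\tilde{\Psi}_{\e}(x)-\tilde{\Psi}_{\e}(y)|^{2}\leq 2\eta(\e|y|)^{2}|w(x)-w(y)|^{2}+2w(x)^{2}|\eta(\e|x|)-\eta(\e|y|)|^{2},
\end{equation*}
combined with the fact that $\eta(\e|\cdot|)$ is $\e$-Lipschitz and with Lebesgue's dominated convergence, yields $[\tilde{\Psi}_{\e}]^{2}\to [w]^{2}$. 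For the potential part, the continuity of $V$ (with $V(z)=V_{0}$) and uniform boundedness of $V$ on $M_{\delta}$ give $\int V(\e y+z)\eta(\e|y|)^{2}w^{2}\,dy\to V_{0}\|w\|^{2}_{L^{2}}$ by dominated convergence. Altogether, $\|\Psi_{\e,z}\|^{2}_{H^{s}_{\e}}\to \|w\|^{2}_{H^{s}_{0}}$.

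\emph{The Nehari parameter.} A key observation is that $\supp \Psi_{\e,z}\subset \{x:\e x\in B_{\delta}(z)\}\subset M_{\delta}\subset \Lambda$, hence $\supp \Psi_{\e,z}\subset \Lambda_{\e}$, and consequently $g(\e x,\Psi_{\e,z})=f(\Psi_{\e,z})$ throughout the support, for every $\e>0$ and every $z\in M$. The Nehari identity $\langle J'_{\e}(t_{\e}\Psi_{\e,z}),t_{\e}\Psi_{\e,z}\rangle=0$ therefore reduces to
\begin{equation*}
t_{\e}^{2}\|\Psi_{\e,z}\|^{2}_{H^{s}_{\e}}=\int_{\R^{N}} f(t_{\e}\Psi_{\e,z})\,t_{\e}\Psi_{\e,z}\,dx.
\end{equation*}
Assumptions $(f_{1})$-$(f_{2})$ give $f(s)s\le \sigma s^{2}+C_{\sigma}s^{q}$, producing a uniform lower bound $t_{\e}\ge \tau>0$; assumption $(f_{3})$ forces $t_{\e}$ to stay bounded, since otherwise the right-hand side grows like $t_{\e}^{\theta}$ with $\theta>2$ on the region $\{\Psi_{\e,z}\gtrsim 1\}$, outstripping the quadratic left-hand side. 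Extracting $t_{\e_{n}}\to t_{0}\in (0,\infty)$ and passing to the limit after translation (dominated convergence being justified by $(f_{2})$ and the polynomial decay of $w$) gives $t_{0}^{2}\|w\|^{2}_{H^{s}_{0}}=\int f(t_{0}w)t_{0}w\,dy$; since $w\in \mathcal{N}_{0}$, the uniqueness clause in Lemma \ref{lemz1}(a) applied to $J_{0}$ (cf.\ Remark \ref{rem2}), which relies on $(f_{4})$, forces $t_{0}=1$.

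\emph{Conclusion and main obstacle.} Since $G(\e x,t_{\e}\Psi_{\e,z})=F(t_{\e}\Psi_{\e,z})$ on $\supp \Psi_{\e,z}$, combining the two limits above with a further dominated-convergence argument for $\int F(t_{\e}\Psi_{\e,z})\,dx$ gives
\begin{equation*}
J_{\e}(\Phi_{\e}(z))=\tfrac{t_{\e}^{2}}{2}\|\Psi_{\e,z}\|^{2}_{H^{s}_{\e}}-\int_{\R^{N}} F(t_{\e}\Psi_{\e,z})\,dx \longrightarrow \tfrac{1}{2}\|w\|^{2}_{H^{s}_{0}}-\int F(w)\,dy = J_{0}(w)= c_{V_{0}}.
\end{equation*}
The genuinely delicate point is the nonlocal seminorm convergence $[\tilde{\Psi}_{\e}]^{2}\to [w]^{2}$: the fractional Gagliardo seminorm does not split cleanly under multiplication by a cutoff, and it is precisely the commutator estimate above together with the $\e$-smallness of the modulus of continuity of $\eta(\e|\cdot|)$ that makes the estimate close. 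Everything else---the support localization placing us inside $\Lambda_{\e}$, uniqueness of the Nehari projection, and the compactness argument upgrading pointwise to uniform convergence in $z\in M$---is essentially routine.
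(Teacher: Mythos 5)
Your proposal is correct and follows essentially the same strategy as the paper: a sequential/contradiction argument for the uniformity in $z\in M$, translation to the profile $\eta(\e|\cdot|)w$, the observation that the support lies in $\Lambda_{\e}$ so that $g=f$ there, boundedness of $t_{\e}$ away from $0$ and $\infty$ via $(f_1)$--$(f_3)$, passage to the limit in the Nehari identity, and $t_{0}=1$ from $w\in\mathcal{N}_{0}$ and $(f_4)$. The only difference is cosmetic: you make explicit the convergence $[\eta(\e|\cdot|)w]^{2}\rightarrow[w]^{2}$ via a cutoff (commutator-type) estimate, a step the paper subsumes in its appeal to the dominated convergence theorem.
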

\begin{proof}
Assume by contradiction that there there exists $\delta_{0}>0$, $(z_{n})\subset M$ and $\e_{n}\rightarrow 0$ such that 
\begin{equation}\label{4.41}
|J_{\e_{n}}(\Phi_{\e_{n}}(z_{n}))-c_{V_{0}}|\geq \delta_{0}.
\end{equation}
We first show that $\lim_{n\rightarrow \infty}t_{\e_{n}}<\infty$.
Let us observe that by using the change of variable $y=\frac{\e_{n}x-z_{n}}{\e_{n}}$, if $y\in B_{\frac{\delta}{\e_{n}}}(0)$, it follows that $\e_{n} y\in B_{\delta}(0)$ and $\e_{n} y+z_{n}\in B_{\delta}(z_{n})\subset M_{\delta}\subset \Lambda$. Since $G=F$ on $\Lambda$, we can see that 
\begin{align}
J_{\e_{n}}(\Phi_{\e_{n}}(z_{n}))&=\frac{t_{\e_{n}}^{2}}{2}\int_{\R^{N}} |(-\Delta)^{\frac{s}{2}}(\eta(|\e_{n} y|)w(y))|^{2}\, dy+\frac{t_{\e_{n}}^{2}}{2}\int_{\R^{N}} V(\e_{n} y+z_{n}) (\eta(|\e_{n} y|) w(y))^{2}\, dy \nonumber\\
&-\int_{\R^{N}} F(t_{\e_{n}}\eta(|\e_{n} y|)w(y)) \, dy.
\end{align}
Now, let assume that $t_{\e_{n}}\rightarrow \infty$. From the definition of $t_{\e_{n}}$, we get
\begin{equation}\label{3.9}
\|\Psi_{\e_{n}, z_{n}}\|^{2}_{H^{s}_{\e_{n}}}=\int_{\R^{N}} \frac{f(t_{\e_{n}}\eta(|\e_{n} y|)w(y)) (\eta(|\e_{n} y|)w(y))^{2}}{t_{\e_{n}}\eta(|\e_{n} y|)w(y)} \, dy
\end{equation}
Since $\eta=1$ in $B_{\frac{\delta}{2}}(0)$ and $B_{\frac{\delta}{2}}(0)\subset B_{\frac{\delta}{2\e_{n}}}(0)$ for $n$ big enough, and by using $(f_4)$, we obtain
\begin{align}\label{3.10}
\|\Psi_{\e_{n}, z_{n}}\|^{2}_{H^{s}_{\e_{n}}}\geq \int_{B_{\frac{\delta}{2}}(0)} \frac{f(t_{\e_{n}}w(y)) w^{2}(y)}{t_{\e_{n}}w(y)} \, dy \nonumber\\
\geq \frac{f(t_{\e_{n}}w(\bar{x}))}{t_{\e_{n}}w(\bar{x})} \int_{B_{\frac{\delta}{2}}(0)}  w^{2}(y)\, dy
\end{align}
where $w(\bar{x})=\min_{B_{\frac{\delta}{2}}(0)} w(x)$. Taking the limit as $n\rightarrow \infty$ in (\ref{3.10}), and by using $(f_3)$, we can deduce that
$$
\lim_{n\rightarrow \infty}\|\Psi_{\e_{n}, z_{n}}\|^{2}_{H^{s}_{\e_{n}}}=\infty
$$
which is a contradiction because 
$$
\lim_{n\rightarrow \infty}\|\Psi_{\e_{n}, z_{n}}\|^{2}_{H^{s}_{\e_{n}}}=\|w\|^{2}_{H^{s}_{0}}
$$
in view of dominated convergence theorem.\\
Thus, $(t_{\e_{n}})$ is bounded, and we can assume that $t_{\e_{n}}\rightarrow t_{0}\geq 0$. Clearly, if $t_{0}=0$, from $(f_1)$, by limitation of $\|\Psi_{\e_{n}, z_{n}}\|^{2}_{H^{s}_{\e_{n}}}$ and (\ref{3.9}), we can deduce that $\|\Psi_{\e_{n}, z_{n}}\|^{2}_{H^{s}_{\e_{n}}}\rightarrow 0$, which is impossible. Hence, $t_{0}>0$.

Now, by using the dominated convergence Theorem, we can see that 
$$
\lim_{n\rightarrow \infty}\int_{\R^{N}} f(\Psi_{\e_{n}, z_{n}})\Psi_{\e_{n}, z_{n}}\, dx=\int_{\R^{N}} f(w)w\, dx.
$$
Then, taking the limit as $n\rightarrow \infty$ in (\ref{3.9}),  we obtain
$$
\|w\|^{2}_{H^{s}_{0}}=\int_{\R^{N}} \frac{f(t_{0}w)}{t_{0}} w\, dx.
$$ 
By using the fact that $w\in \mathcal{N}_{0}$, we deduce that $t_{0}=1$. Moreover,
$$
\lim_{n\rightarrow \infty} J_{\e_{n}}(\Phi_{\e_{n}}(z_{n}))=J_{0}(w)=c_{V_{0}},
$$
which contradicts (\ref{4.41}).

\end{proof}

\noindent
For any $\delta>0$, let $\rho=\rho(\delta)>0$ be such that $M_{\delta}\subset B_{\delta}(0)$.
We define $\Upsilon: \R^{N}\rightarrow \R^{N}$ by setting $\Upsilon(x)=x$ for $|x|\leq \rho$ and $\Upsilon(x)=\frac{\rho x}{|x|}$ for $|x|\geq \rho$.
Then we define the barycenter map $\beta_{\e}: \mathcal{N}_{\e}\rightarrow \R^{N}$ given by
$$
\beta_{\e}(u)=\frac{\int_{\R^{N}} \Upsilon(\e x) u^{2}(x)\, dx}{\int_{\R^{N}} u^{2}(x) \,dx}.
$$

\noindent
We recall the following property for the barycenter map holds. This result is well-known in literature (see for instance \cite{FS}), however, for the sake of readers, we show it here. 
\begin{lem}\label{lemma3.5}
The function $\beta_{\e}$ verifies the following limit
$$
\lim_{\e \rightarrow 0} \beta_{\e}(\Phi_{\e}(z))=z \mbox{ uniformly in } z\in M.
$$
\end{lem}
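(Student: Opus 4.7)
My plan is to argue by contradiction and reduce the problem, via the change of variable $y = x - z/\e$, to a dominated convergence argument using that $M$ is compact and contained in $B_\rho(0)$.

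Suppose, contrary to the claim, that there exist $\delta_0 > 0$, a sequence $(z_n) \subset M$ and $\e_n \to 0^+$ such that
\[
|\beta_{\e_n}(\Phi_{\e_n}(z_n)) - z_n| \geq \delta_0.
\]
Since $M$ is bounded (being contained in the bounded set $\Lambda$) and closed (by continuity of $V$), up to a subsequence I may assume $z_n \to z_0 \in M$. The first key observation is that the multiplicative constant $t_{\e_n}$ plays no role here: performing the change of variable $y = x - z_n/\e_n$ in the definition of $\beta_{\e_n}$, the factor $t_{\e_n}^2$ cancels in numerator and denominator, giving
\[
\beta_{\e_n}(\Phi_{\e_n}(z_n)) = \frac{\displaystyle\int_{\R^N} \Upsilon(\e_n y + z_n)\,(\eta(|\e_n y|) w(y))^2\, dy}{\displaystyle\int_{\R^N} (\eta(|\e_n y|) w(y))^2\, dy}.
\]
Subtracting $z_n$ and using that the denominator is the same integral, I can write
\[
\beta_{\e_n}(\Phi_{\e_n}(z_n)) - z_n = \frac{\displaystyle\int_{\R^N} \bigl[\Upsilon(\e_n y + z_n) - z_n\bigr] (\eta(|\e_n y|) w(y))^2\, dy}{\displaystyle\int_{\R^N} (\eta(|\e_n y|) w(y))^2\, dy}.
\]

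Next I would apply the dominated convergence theorem to both integrals. For the denominator, $(\eta(|\e_n y|) w(y))^2 \to w(y)^2$ pointwise and is dominated by $w(y)^2 \in L^1(\R^N)$, so the denominator tends to $\|w\|_{L^2(\R^N)}^2 > 0$. For the numerator, since $\Upsilon$ is continuous and $\e_n y + z_n \to z_0$ for every fixed $y$, and since $z_0 \in M \subset M_\delta \subset B_\rho(0)$ gives $\Upsilon(z_0) = z_0$, I get $\Upsilon(\e_n y + z_n) - z_n \to z_0 - z_0 = 0$ pointwise. Moreover $|\Upsilon(\e_n y + z_n) - z_n| \leq 2\rho$ uniformly, so $|\Upsilon(\e_n y + z_n) - z_n|(\eta(|\e_n y|) w(y))^2 \leq 2\rho\, w(y)^2 \in L^1(\R^N)$, and the numerator tends to $0$. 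Consequently $\beta_{\e_n}(\Phi_{\e_n}(z_n)) - z_n \to 0$, contradicting $|\beta_{\e_n}(\Phi_{\e_n}(z_n)) - z_n| \geq \delta_0$.

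The main (minor) point to be careful about is verifying that the convergence in the numerator does not depend on the particular sequence $(z_n) \subset M$ chosen; the compactness of $M$ together with the uniform bound $|\Upsilon(\e_n y + z_n) - z_n| \leq 2\rho$ ensures the dominated convergence applies with a domination independent of $n$, so after extracting a convergent subsequence $z_n \to z_0 \in M$ the argument goes through, yielding uniform convergence by the standard subsequence argument.
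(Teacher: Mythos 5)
Your argument is correct and is essentially the paper's own proof: contradiction, the change of variable that cancels $t_{\e_n}$, the identity $\beta_{\e_n}(\Phi_{\e_n}(z_n))-z_n=\frac{\int_{\R^{N}}[\Upsilon(\e_n y+z_n)-z_n](\eta(|\e_n y|)w(y))^{2}\,dy}{\int_{\R^{N}}(\eta(|\e_n y|)w(y))^{2}\,dy}$, and dominated convergence using $M\subset B_{\rho}(0)$ so that $|\Upsilon(\e_n y+z_n)-z_n|\leq 2\rho$. The only cosmetic difference is that you extract $z_n\to z_0$ to get pointwise convergence in the numerator (one could instead use that $\Upsilon$ is Lipschitz and $\Upsilon(z_n)=z_n$, avoiding the subsequence), which is perfectly legitimate within the contradiction framework.
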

\begin{proof}
Suppose by contradiction that there exists $\delta_{0}>0$, $(z_{n})\subset M$ and $\e_{n}\rightarrow 0$ such that 
\begin{equation}\label{4.4}
|\beta_{\e_{n}}(\Phi_{\e_{n}}(z_{n}))-z_{n}|\geq \delta_{0}.
\end{equation}
By using the definitions of $\Phi_{\e_{n}}(z_{n})$, $\beta_{\e_{n}}$ and $\eta$, we can see that 
$$
\beta_{\e_{n}}(\Psi_{\e_{n}}(z_{n}))=z_{n}+\frac{\int_{\R^{N}}[\Upsilon(\e_{n}y+z_{n})-z_{n}] |\eta(|\e_{n}y|) w(y)|^{2} \, dy}{\int_{\R^{N}} |\eta(|\e_{n}y|) w(y)|^{2}\, dy}.
$$
Taking into account $(z_{n})\subset M\subset B_{\rho}$ and the Dominated Convergence Theorem, we can infer that 
$$
|\beta_{\e_{n}}(\Phi_{\e_{n}}(z_{n}))-z_{n}|=o_{n}(1)
$$
which contradicts (\ref{4.4}).

\end{proof}

\noindent
Now, we state the following useful result whose proof can be found in \cite{FS}.
\begin{lem}\label{FS}
Let $(u_{n})\subset \mathcal{N}_{0}$ be a sequence satisfying $J_{0}(u_{n})\rightarrow c_{V_{0}}$. Then, up to subsequences, the following alternatives holds:
\begin{compactenum}[(i)]
\item $(u_{n})$ strongly converges in $H^{s}(\R^{N})$, 
\item there exists a sequence $(\tilde{z}_{n})\subset \R^{N}$ such that,  up to a subsequence, $v_{n}(x)=u_{n}(x+\tilde{z}_{n})$ converges strongly in $H^{s}(\R^{N})$.
\end{compactenum}
In particular, there exists a minimizer $w\in H^{s}(\R^{N})$ for $J_{0}$ with $J_{0}(w)=c_{V_{0}}$.
\end{lem}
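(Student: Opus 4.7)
The plan is a concentration--compactness argument in the spirit of Lions, exploiting translation invariance of the autonomous functional $J_{0}$, the subcritical growth of $f$, and the Ambrosetti--Rabinowitz condition $(f_3)$. First I would establish that $(u_n)$ is bounded in $H^{s}(\R^{N})$: since $u_n\in\mathcal{N}_{0}$ and by $(f_3)$,
\begin{equation*}
J_{0}(u_n)=J_{0}(u_n)-\tfrac{1}{\theta}\langle J_{0}'(u_n),u_n\rangle\geq\Bigl(\tfrac{1}{2}-\tfrac{1}{\theta}\Bigr)\|u_n\|_{H^{s}_{0}}^{2},
\end{equation*}
which together with $J_{0}(u_n)\to c_{V_{0}}$ yields the bound. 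Next I would rule out vanishing via Lemma \ref{lions lemma}: if $\sup_{y\in\R^{N}}\int_{B_R(y)}u_n^{2}\,dx\to 0$, then $u_n\to 0$ in $L^{t}(\R^{N})$ for $t\in(2,2^{*}_{s})$, and $(f_1)$--$(f_2)$ force $\int f(u_n)u_n\,dx\to 0$; the Nehari identity $\|u_n\|_{H^{s}_{0}}^{2}=\int f(u_n)u_n\,dx$ then contradicts the uniform lower bound $\|u_n\|_{H^{s}_{0}}\geq r>0$ on $\mathcal{N}_{0}$.

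Non--vanishing therefore provides $(\tilde{z}_n)\subset\R^{N}$ and $\beta>0$ with $\int_{B_R(\tilde{z}_n)}u_n^{2}\,dx\geq\beta$. Setting $v_n(x)=u_n(x+\tilde{z}_n)$, translation invariance gives $v_n\in\mathcal{N}_{0}$ and $J_{0}(v_n)\to c_{V_{0}}$; local compact embedding (Theorem \ref{Sembedding}) yields, up to a subsequence, $v_n\rightharpoonup w\neq 0$ in $H^{s}(\R^{N})$ with $v_n\to w$ a.e. The two alternatives of the lemma then correspond to whether $(\tilde{z}_n)$ is bounded (case (i), where $u_n$ itself converges) or unbounded (case (ii)). To identify $w$ as a critical point, I would upgrade the minimizing sequence via Ekeland's variational principle applied to $\psi_{0}$ on $\mathbb{S}_{0}^{+}$ (Remark \ref{rem2}, Proposition \ref{propz2}$(c)$), so that without loss of generality $J_{0}'(u_n)\to 0$; passing to the weak limit in $\langle J_{0}'(v_n),\varphi\rangle$ for $\varphi\in C^{\infty}_{c}(\R^{N})$, subcritical growth plus $L^{t}_{\mathrm{loc}}$--compactness give $J_{0}'(w)=0$, whence $w\in\mathcal{N}_{0}$ and $J_{0}(w)\geq c_{V_{0}}$.

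The strong--convergence step is the core. Observe that $(f_3)$ gives $\tfrac{1}{2}f(t)t-F(t)\geq\bigl(\tfrac{\theta}{2}-1\bigr)F(t)\geq 0$, and for $u\in\mathcal{N}_{0}$,
\begin{equation*}
J_{0}(u)=\int_{\R^{N}}\Bigl[\tfrac{1}{2}f(u)u-F(u)\Bigr]\,dx.
\end{equation*}
Fatou's lemma applied to this nonnegative integrand combined with $v_n\to w$ a.e. yields $c_{V_{0}}=\lim J_{0}(v_n)\geq J_{0}(w)\geq c_{V_{0}}$, hence $J_{0}(w)=c_{V_{0}}$ and Fatou holds with equality. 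A.e. convergence plus convergence of integrals of nonnegative integrands promotes this to $L^{1}$--convergence of $\tfrac{1}{2}f(v_n)v_n-F(v_n)$; since $(f_3)$ dominates $F(v_n)$ by $\tfrac{2}{\theta-2}\bigl(\tfrac{1}{2}f(v_n)v_n-F(v_n)\bigr)$, dominated convergence gives $F(v_n)\to F(w)$ and then $f(v_n)v_n\to f(w)w$ in $L^{1}(\R^{N})$. The Nehari identity forces $\|v_n\|_{H^{s}_{0}}^{2}\to\|w\|_{H^{s}_{0}}^{2}$, and combined with weak convergence in the Hilbert space $H^{s}(\R^{N})$ this upgrades to strong convergence, providing simultaneously the minimizer $w$ announced in the conclusion.

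The main obstacle is precisely this last promotion: converting the mere fact $J_{0}(u_n)\to c_{V_{0}}$ and $u_n\in\mathcal{N}_{0}$ into $H^{s}$--strong convergence after the translation. The crucial ingredients are Ekeland's principle (to gain $J_{0}'(u_n)\to 0$) and the AR--structure supplied by $(f_3)$ (to dominate $F$ by $\tfrac{1}{2}f(u)u-F(u)$ and thus turn Fatou--equality into genuine norm convergence).
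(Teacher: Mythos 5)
Your proposal is correct in substance, and it is worth noting that the paper itself gives no proof of this lemma: it is quoted from \cite{FS}, where the corresponding compactness statement is established for $C^{1}$ nonlinearities, so that the differentiable structure of the Nehari manifold is available. Your route is the natural adaptation to merely continuous $f$: boundedness from $(f_{3})$ and the Nehari constraint, exclusion of vanishing via Lemma \ref{lions lemma} together with the uniform lower bound $\|u\|_{H^{s}_{0}}\geq r$ on $\mathcal{N}_{0}$ (Remark \ref{rem2}), translation invariance of $J_{0}$, identification of the weak limit as a nontrivial critical point, and then the Fatou-equality argument for the nonnegative integrand $\tfrac12 f(t)t-F(t)$ upgraded by generalized dominated convergence to $\int f(v_{n})v_{n}\to\int f(w)w$, which with the Nehari identity and weak convergence gives strong convergence. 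This buys a self-contained proof inside the Szulkin--Weth framework the paper already sets up (Lemma \ref{lemz1}, Proposition \ref{propz2}), and it is consistent with how the paper itself manipulates minimizing sequences in Lemma \ref{prop3.3}.

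The one step you gloss is the claim that ``without loss of generality $J_{0}'(u_{n})\to 0$''. Ekeland's principle (applied, as in Lemma \ref{prop3.3}, to the extension of $\hat\psi_{0}$ to the complete space $\overline{\mathbb{S}}_{0}^{+}$, since $\mathbb{S}_{0}^{+}$ itself is not complete) produces a \emph{new} sequence $(\tilde{w}_{n})\subset\mathbb{S}_{0}^{+}$ with $\psi_{0}'(\tilde{w}_{n})\to 0$ and $\|\tilde{w}_{n}-w_{n}\|_{H^{s}_{0}}\to 0$, where $w_{n}=u_{n}/\|u_{n}\|_{H^{s}_{0}}$; the Palais--Smale sequence is $m_{0}(\tilde{w}_{n})$, not $(u_{n})$, and closeness on the sphere does not by itself give closeness in $\mathcal{N}_{0}$. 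Since the lemma concerns the original sequence, you should run your vanishing/weak-limit/Fatou argument for $m_{0}(\tilde{w}_{n})$ (extracting the centers $\tilde{z}_{n}$ for that sequence) and then transfer back: once $m_{0}(\tilde{w}_{n})(\cdot+\tilde{z}_{n})\to m_{0}(\bar{w})$ strongly, continuity of $m_{0}^{-1}$ gives $\tilde{w}_{n}(\cdot+\tilde{z}_{n})\to\bar{w}$, hence $w_{n}(\cdot+\tilde{z}_{n})\to\bar{w}$ because translations are isometries; finally, since $J_{0}$ is autonomous one has $t_{u(\cdot+z)}=t_{u}$, so $u_{n}(\cdot+\tilde{z}_{n})=m_{0}(w_{n}(\cdot+\tilde{z}_{n}))\to m_{0}(\bar{w})$ by the continuity of $m_{0}$ (Lemma \ref{lemz1}-(c) with $\e=0$). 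This is exactly the device used in Lemma \ref{prop3.3}, so the repair is available with the tools at hand; with this reordering your proof is complete, and the bounded/unbounded dichotomy for $(\tilde{z}_{n})$ then yields alternatives (i) and (ii) as you indicate.
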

At this point, we prove the following compactness result which will be crucial in the sequel.
\begin{lem}\label{prop3.3}
Let $\e_{n}\rightarrow 0$ and $(u_{n})\subset \mathcal{N}_{\e_{n}}$ be such that $J_{\e_{n}}(u_{n})\rightarrow c_{V_{0}}$. Then there exists $(\tilde{z}_{n})\subset \R^{N}$ such that $v_{n}(x)=u_{n}(x+\tilde{z}_{n})$ has a convergent subsequence in $H^{s}(\R^{N})$. Moreover, up to a subsequence, $z_{n}=\e_{n} \tilde{z}_{n}\rightarrow z_{0}\in M$.
\end{lem}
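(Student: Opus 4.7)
I would apply a concentration--compactness argument: first produce a tight translated subsequence, then identify its concentration point via the penalization structure of $g$ and the minimality of the level $c_{V_0}$. By inequality (\ref{absurd}) applied on $\mathcal{N}_{\e_n}$, combined with $J_{\e_n}(u_n)\to c_{V_0}$ and $V\geq V_0>0$, the sequence $(u_n)$ is uniformly bounded in $H^s(\R^N)$. To rule out vanishing, suppose $\sup_{y\in\R^N}\int_{B_R(y)} u_n^2\,dx\to 0$ for some $R>0$; by Lemma~\ref{lions lemma}, $u_n\to 0$ in $L^q(\R^N)$. Splitting the Nehari identity as
\[
\|u_n\|_{H^s_{\e_n}}^2=\int_{\Lambda_{\e_n}} f(u_n) u_n\,dy+\int_{\R^N\setminus\Lambda_{\e_n}} \tilde f(u_n) u_n\,dy,
\]
controlling the first piece via $(f_1)$--$(f_2)$ and the second via $(g_4)$ (which yields a term $\tfrac{1}{K}\|u_n\|_{H^s_{\e_n}}^2$ absorbable on the left), I obtain $\|u_n\|_{H^s_{\e_n}}\to 0$, contradicting $\|u_n\|_{H^s_{\e_n}}\geq r>0$. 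Hence, after passing to a subsequence, there exist $R,\delta_0>0$ and $\tilde z_n\in\R^N$ with $\int_{B_R(\tilde z_n)} u_n^2\,dx\geq\delta_0$. Setting $v_n(x):=u_n(x+\tilde z_n)$ and $z_n:=\e_n\tilde z_n$, we have $v_n\rightharpoonup v\neq 0$ in $H^s(\R^N)$, with $v_n\to v$ a.e.\ and in $L^p_{\mathrm{loc}}$ for every $p\in[2,2^{*}_{s})$.

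The core difficulty is to show that $(z_n)$ is bounded and $z_n\to z_0\in M$. First suppose $|z_n|\to\infty$ or $z_n\to z_0\notin\overline\Lambda$. Since $\Lambda$ is bounded, for every compact $K\subset\R^N$ one has $\e_n x+z_n\notin\Lambda$ for all $x\in K$ and $n$ large, so $g(\e_n x+z_n, v_n)=\tilde f(v_n)$ on $K$. Passing to the weak limit in the translated equation (extracting a further subsequence so that $V(\e_n x+z_n)$ has a limit $V_*\geq V_0$ in the $L^p_{\mathrm{loc}}$-weak sense), $v$ satisfies $(-\Delta)^s v+V_*(x)v=\tilde f(v)$ in $\R^N$. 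Testing with $v$ itself and using the pointwise estimate $\tilde f(t)t\leq\tfrac{V_0}{K}t^2\leq\tfrac{V_*(x)}{K}t^2$ yields
\[
[v]^2+\left(1-\tfrac{1}{K}\right)\int_{\R^N} V_*(x) v^2\,dx\leq 0,
\]
which forces $v\equiv 0$ since $K>2$: contradiction. Thus $z_n\to z_0\in\overline\Lambda$ along a subsequence. If moreover $V(z_0)>V_0$, then on $\mathrm{int}\,\Lambda$ the limit equation becomes $(-\Delta)^s v+V(z_0)v=f(v)$, so $v\in\mathcal{N}_{V(z_0)}$ and $J_{V(z_0)}(v)\geq c_{V(z_0)}$ (the case $z_0\in\partial\Lambda$ is treated by splitting into the subsequences $z_n\in\Lambda$, reducing to the interior case, and $z_n\notin\overline\Lambda$, reducing to the $\tilde f$-argument above). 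Since $\tfrac12 g(\e_n y,u_n)u_n-G(\e_n y,u_n)\geq 0$ by $(f_3)$ and $(g_4)$, Fatou's lemma applied to the identity $J_{\e_n}(u_n)=\int[\tfrac12 g u_n-G]\,dy$ gives
\[
c_{V_0}\geq \int_{\R^N}\Big[\tfrac12 f(v)v-F(v)\Big]\,dx=J_{V(z_0)}(v)\geq c_{V(z_0)}>c_{V_0},
\]
using the standard strict monotonicity of the autonomous ground-state level in the constant potential. This contradiction yields $V(z_0)=V_0$, and by $(V_2)$ we must have $z_0\in\Lambda$, so $z_0\in M$.

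Finally, with $z_0\in M$ identified, $v$ solves $(-\Delta)^s v+V_0 v=f(v)$ and $v\in\mathcal{N}_0$, whence $J_0(v)\geq c_{V_0}$. Saturation of the Fatou chain above forces $J_0(v)=c_{V_0}$, and combined with weak lower semicontinuity of the norms a Brezis--Lieb-type argument upgrades weak convergence to strong convergence $v_n\to v$ in $H^s(\R^N)$. The most delicate step throughout is the identification of the weak limit of the potential $V(\e_n x+z_n)$ in the unbounded-$(z_n)$ alternative, but only the bound $V\geq V_0$ is actually used to close the contradiction, so no further hypothesis on the behaviour of $V$ at infinity is required.
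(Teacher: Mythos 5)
Your opening steps (boundedness of $(u_{n})$ via the estimate \eqref{absurd} on the Nehari set, and exclusion of vanishing via Lemma \ref{lions lemma}) coincide with the paper. After that you reverse the paper's order: you try to locate $z_{0}$ first, by passing to the limit in the translated equation, and you postpone strong convergence to a final ``Brezis--Lieb upgrade''. The paper instead projects $v_{n}$ onto $\mathcal{N}_{0}$, proves strong convergence of $\tilde v_{n}=t_{n}v_{n}$ right away (via $m_{0}^{-1}$, Ekeland's variational principle on $\overline{\mathbb{S}}_{0}^{+}$ and Lemma \ref{FS}), and only then studies $z_{n}$. This reordering is where your argument develops genuine gaps.

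First, the case $z_{0}\in\partial\Lambda$ is not handled by your splitting. By $(V_2)$ every boundary point has $V(z_{0})>V_{0}$, so this is precisely the case you must exclude; but if $z_{n}\rightarrow z_{0}\in\partial\Lambda$, the distance from $z_{n}$ to $\partial\Lambda$ may be of smaller order than $\e_{n}$, so for fixed $x$ the points $\e_{n}x+z_{n}$ can lie on either side of $\partial\Lambda$ and $\chi_{\Lambda}(\e_{n}x+z_{n})$ need not converge pointwise. Consequently neither the limit equation with $f$ nor the one with $\tilde f$ is available, and the Fatou lower bound $\liminf\int[\tfrac12 g(\e_{n}x+z_{n},v_{n})v_{n}-G(\e_{n}x+z_{n},v_{n})]\,dx\geq\int[\tfrac12 f(v)v-F(v)]\,dx$ is unjustified; restricting to the subsequences $z_{n}\in\Lambda$ or $z_{n}\notin\overline\Lambda$ does not reduce matters to the interior or exterior case. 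The paper sidesteps this entirely: its contradiction for $V(z_{0})>V_{0}$ uses only $G\leq F$, the already-established strong convergence of $\tilde v_{n}$, and Fatou on the potential term, with no identification of the pointwise limit of $g$. Second, in the alternative $|z_{n}|\rightarrow\infty$ you extract a weak $L^{p}_{\mathrm{loc}}$ limit $V_{*}$ of $V(\e_{n}\cdot+z_{n})$; since $V$ is only continuous and bounded below, these functions may blow up locally and no such limit need exist, so passing to the limit in the equation requires an additional argument (the paper instead plugs $v_{n}$ into its own Nehari identity, absorbs the $\tilde f$-part by $(g_4)$, and kills the exterior part using the strong convergence already in hand). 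Third, the closing step is the heart of the lemma, not a remark: saturation of your Fatou chain only gives convergence of the combined integrals $\int[\tfrac12 g(\e_{n}x+z_{n},v_{n})v_{n}-G(\e_{n}x+z_{n},v_{n})]\,dx$, which does not by itself split into convergence of $\|v_{n}\|^{2}$ and of $\int f(v_{n})v_{n}\,dx$, nor does it exclude loss of mass at infinity; making this rigorous essentially requires the compactness machinery (Ekeland plus Lemma \ref{FS}) that the paper deploys. As written, the proposal therefore does not close.
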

\begin{proof}
Since $\langle J'_{\e_{n}}(u_{n}), u_{n}\rangle=0$ and $J_{\e_{n}}(u_{n})\rightarrow c_{V_{0}}$, it is easy to prove that $(u_{n})$ is bounded in $H^{s}_{\e_{n}}$. 
Indeed, by using $(g3)$, $(g4)$ and $\theta>2$, we get
\begin{align*}
c_{V_{0}}+o_{n}(1)&= J_{\e_{n}}(u_{n})-\frac{1}{\theta} \langle J'_{\e_{n}}(u_{n}), u_{n}\rangle \\
&\geq \left(\frac{1}{2}-\frac{1}{\theta}\right)\|u_{n}\|^{2}_{H^{s}_{\e_{n}}}+\frac{1}{\theta}\int_{\R^{N}\setminus \Lambda_{\e_{n}}} \left[g(\e x, u_{n})u_{n}-\theta G(\e x, u_{n})\right] \, dx \\
&\geq  \left(\frac{1}{2}-\frac{1}{\theta}\right) \left(1-\frac{1}{K}\right)\|u_{n}\|^{2}_{H^{s}_{\e_{n}}}.
\end{align*}
Then, there exists $C>0$ (independent of $n$) such that $\|u_{n}\|_{H^{s}_{\e_{n}}}\leq C$ for all $n\in \N$.\\

Now, we show that there exist a sequence $(\tilde{z}_{n})\subset \R^{N}$, and constants $R>0$ and $\gamma>0$ such that
\begin{equation}\label{sacchi}
\liminf_{n\rightarrow \infty}\int_{B_{R}(\tilde{z}_{n})} |u_{n}|^{2} \, dx\geq \gamma>0.
\end{equation}
Suppose that condition \eqref{sacchi} does not hold. Then, for all $R>0$, we have
$$
\lim_{n\rightarrow \infty}\sup_{z\in \R^{N}}\int_{B_{R}(z)} |u_{n}|^{2} \, dx=0.
$$
Since we know that $(u_{n})$ is bounded in $H^{s}_{0}(\R^{N})$, we can use Lemma \ref{lions lemma} to deduce that $u_{n}\rightarrow 0$ in $L^{q}(\R^{N})$ for any $q\in (2, 2^{*}_{s})$. 
Moreover, by using $|g(\e_{n} x, t)|\leq \e_{n} |t|+C_{\e_{n}}|t|^{q-1}$ in $\R^{N}\times \R$ and $\e_{n}\rightarrow 0$, we  also obtain
\begin{align}\label{glimiti}
\lim_{n\rightarrow \infty}\int_{\R^{N}} g(\e_{n} x, u_{n}) u_{n} \,dx=0= \lim_{n\rightarrow \infty}\int_{\R^{N}} G(\e_{n} x, u_{n}) \, dx.
\end{align}
Taking into account $\langle J'_{\e_{n}}(u_{n}), u_{n}\rangle=0$ and \eqref{glimiti}, we can  infer that $\|u_{n}\|_{H^{s}_{\e_{n}}}\rightarrow 0$ as $n\rightarrow \infty$. This fact and \eqref{glimiti} imply that $J_{\e_{n}}(u_{n})\rightarrow 0$ as $n\rightarrow \infty$, which is a contradiction because of $J_{\e_{n}}(u_{n})\rightarrow c_{V_{0}}>0$. 
Now, we set $v_{n}(x)=u_{n}(x+\tilde{z}_{n})$. Then, $(v_{n})$ is bounded in $H^{s}_{0}$, and we may assume that 
$v_{n}\rightharpoonup v\not\equiv 0$ in $H^{s}_{0}$  as $n\rightarrow \infty$.
Fix $t_{n}>0$ such that $\tilde{v}_{n}=t_{n} v_{n}\in \mathcal{N}_{0}$. Since $u_{n}\in \mathcal{N}_{\e_{n}}$, we can see that 
$$
c_{V_{0}}\leq J_{0}(\tilde{v}_{n})\leq J_{\e_{n}}(t_{n}u_{n})\leq J_{\e_{n}}(u_{n})= c_{V_{0}}+o_{n}(1)
$$
which gives $J_{0}(\tilde{v}_{n})\rightarrow c_{V_{0}}$. In particular, we get $\tilde{v}_{n}\rightharpoonup \tilde{v}$ in $H^{s}_{0}$ and $t_{n}\rightarrow t^{*}>0$. Then, from the uniqueness of the weak limit, we have $\tilde{v}=t^{*}v\not\equiv 0$. \\
Now, we aim to show that 
\begin{equation}\label{elena}
\tilde{v}_{n}\rightarrow \tilde{v} \mbox{ in } H^{s}_{0}.
\end{equation} 
Since $(\tilde{v}_{n})\subset \mathcal{N}_{0}$ and $J_{0}(\tilde{v}_{n})\rightarrow c_{V_{0}}$, we can apply Lemma \ref{lemz1}-(c) and Proposition \ref{propz2}-(d) with $\e=0$ (see Remark \ref{rem2}) to infer that
$$
w_{n}=m_{0}^{-1}(\tilde{v}_{n})=\frac{\tilde{v}_{n}}{\|\tilde{v}_{n}\|_{H^{s}_{0}}}\in \mathbb{S}_{0}^{+}
$$
and
$$
\psi_{0}(w_{n})=J_{0}(\tilde{v}_{n})\rightarrow c_{V_{0}}=\inf_{v\in \mathbb{S}_{0}^{+}}\psi_{0}(v).
$$
Let us introduce the following map $\mathcal{L}: \overline{\mathbb{S}}_{0}^{+}\rightarrow \R\cup \{\infty\}$ defined by setting
$$
\mathcal{L}(u):=
\begin{cases}
\hat{\psi}_{0}(u)& \text{ if $u\in \mathbb{S}_{0}^{+}$} \\
\infty   & \text{ if $u\in \partial \mathbb{S}_{0}^{+}$}.
\end{cases}
$$ 
We note that 
\begin{itemize}
\item $(\overline{\mathbb{S}}_{0}^{+}, d_{0})$, where $d_{0}(u, v)=\|u-v\|_{H^{s}_{0}}$, is a complete metric space;
\item $\mathcal{L}\in C(\overline{\mathbb{S}}_{0}^{+}, \R\cup \{\infty\})$, by Lemma \ref{lemz1}-(d) with $\e=0$;
\item $\mathcal{L}$ is bounded below, by Proposition \ref{propz2}-(d) with $\e=0$.
\end{itemize}
Hence, by using the Ekeland's variational principle \cite{Ekeland}, we can find $(\tilde{w}_{n})\subset \mathbb{S}_{0}^{+}$ such that $(\tilde{w}_{n})$ is a $(PS)_{c_{V_{0}}}$ sequence for $\psi_{0}$ on $\mathbb{S}_{0}^{+}$ and $\|\tilde{w}_{n}-w_{n}\|_{H^{s}_{0}}=o_{n}(1)$.
From Proposition \ref{propz2}-(c) with $\e=0$, we can deduce that $m_{0}(\tilde{w}_{n})$ is a $(PS)_{c_{V_{0}}}$ sequence of $J_{0}$. 
By applying Lemma \ref{FS}, it follows that there exists $\tilde{w}\in \mathbb{S}_{0}^{+}$ such that $m_{0}(\tilde{w}_{n})\rightarrow m_{0}(\tilde{w})$ in $H^{s}_{0}$. This fact, together with Lemma \ref{lemz1}-(c) with $\e=0$, and $\|\tilde{w}_{n}-w_{n}\|_{H^{s}_{0}}=o_{n}(1)$, allow us to conclude that  $\tilde{v}_{n}\rightarrow \tilde{v}$ in $H^{s}_{0}$, that is (\ref{elena}) holds.
As a consequence, $v_{n}\rightarrow v$ in $H^{s}_{0}$ as $n\rightarrow \infty$.

In order to complete the proof of the lemma, we consider $z_{n}=\e_{n}\tilde{z}_{n}$. Our claim is to show that $(z_{n})$ admits a subsequence, still denoted by $z_{n}$, such that $z_{n}\rightarrow z_{0}$, for some $z_{0}\in M$. Firstly, we prove that $(z_{n})$ is bounded. We argue by contradiction, and we assume that, up to a subsequence, $|z_{n}|\rightarrow \infty$ as $n\rightarrow \infty$. Fixed $R>0$ such that $\Lambda \subset B_{R}(0)$, we can see that 
\begin{align}\label{pasq}
[v_{n}]^{2}+\int_{\R^{N}} V_{0} v_{n}^{2}\, dx &\leq \int_{\R^{N}} g(\e_{n} x+z_{n}, v_{n}) v_{n} \, dx \nonumber\\
&\leq \int_{B_{\frac{R}{\e_{n}}}(0)} \tilde{f}(v_{n}) v_{n} \, dx+\int_{\R^{N}\setminus B_{\frac{R}{\e_{n}}}(0)} f(v_{n}) v_{n} \, dx.
\end{align}
Then, by using the fact that $v_{n}\rightarrow v$ in $H^{s}_{0}$ as $n\rightarrow \infty$ and that $\tilde{f}(t)\leq \frac{V_{0}}{K}t$, we can see that (\ref{pasq}) implies that
$$
[v_{n}]^{2}+\int_{\R^{N}} V_{0} v_{n}^{2}\, dx=o_{n}(1),
$$
that is $v_{n}\rightarrow 0$ in $H^{s}_{0}$, which is a contradiction. Therefore, $(z_{n})$ is bounded, and we may assume that $z_{n}\rightarrow z_{0}\in \R^{N}$. Clearly, if $z_{0}\notin \overline{\Lambda}$, then we can argue as before and we deduce that $v_{n}\rightarrow 0$ in $H^{s}_{0}$, which is impossible. Hence $z_{0}\in \overline{\Lambda}$. Now, we note that if $V(z_{0})=V_{0}$, then we can infer that $z_{0}\notin \partial \Lambda$ in view of $(V_2)$, and then $z_{0}\in M$. Therefore, in the next step, we show that $V(z_{0})=V_{0}$. Suppose by contradiction that $V(z_{0})>V_{0}$.
Then, by using (\ref{elena}) and Fatou's Lemma, we get 
\begin{align*}
c_{V_{0}}=J_{0}(\tilde{v})&<\liminf_{n\rightarrow \infty}\left[\frac{1}{2}[\tilde{v}_{n}]^{2}+\frac{1}{2}\int_{\R^{N}} V(\e_{n}x+z_{n}) \tilde{v}_{n}^{2} \, dx-\int_{\R^{N}} F(\tilde{v}_{n})\, dx  \right] \\
&=\liminf_{n\rightarrow \infty}\left[\frac{t_{n}^{2}}{2}[u_{n}]^{2}+\frac{t_{n}^{2}}{2}\int_{\R^{N}} V(\e_{n}x) u_{n}^{2} \, dx-\int_{\R^{N}} F(t_{n} u_{n})\, dx  \right] \\
&\leq \liminf_{n\rightarrow \infty} J_{\e_{n}}(t_{n} u_{n}) \leq \liminf_{n\rightarrow \infty} J_{\e_{n}}(u_{n})=c_{V_{0}}
\end{align*}
which gives a contradiction.

\end{proof}

\noindent
Now, we introduce a subset $\tilde{\mathcal{N}}_{\e}$ of $\mathcal{N}_{\e}$ by taking a function $h:\R_{+}\rightarrow \R_{+}$ such that $h(\e)\rightarrow 0$ as $\e \rightarrow 0$, and setting
$$
\tilde{\mathcal{N}}_{\e}=\{u\in \mathcal{N}_{\e}: J_{\e}(u)\leq c_{V_{0}}+h(\e)\}.
$$
Fixed $z\in M$, we conclude from Lemma \ref{lemma3.4} that $h(\e)=|J_{\e}(\Phi_{\e}(z))-c_{V_{0}}|\rightarrow 0$ as $\e \rightarrow 0$. Hence $\Phi_{\e}(z)\in \tilde{\mathcal{N}}_{\e}$, and $\tilde{\mathcal{N}}_{\e}\neq \emptyset$ for any $\e>0$. Moreover, we have the following lemma.
\begin{lem}\label{lemma3.7}
$$
\lim_{\e \rightarrow 0} \sup_{u\in \tilde{\mathcal{N}}_{\e}} dist(\beta_{\e}(u), M_{\delta})=0.
$$
\end{lem}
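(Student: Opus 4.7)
The plan is to argue by contradiction: suppose there exist $\delta_0>0$, $\e_n\to 0$ and $u_n\in\tilde{\mathcal{N}}_{\e_n}$ with $\mathrm{dist}(\beta_{\e_n}(u_n),M_\delta)\geq \delta_0$. The first step is to upgrade the one-sided bound $J_{\e_n}(u_n)\leq c_{V_0}+h(\e_n)$ coming from membership in $\tilde{\mathcal{N}}_{\e_n}$ to a full convergence $J_{\e_n}(u_n)\to c_{V_0}$. The upper bound is immediate. For the lower bound, the minimax characterization $c_{\e_n}=\inf_{\mathcal{N}_{\e_n}}J_{\e_n}$ gives $J_{\e_n}(u_n)\geq c_{\e_n}$, and a straightforward comparison argument, together with Lemma \ref{lemma3.4} (which gives $c_{\e_n}\leq J_{\e_n}(\Phi_{\e_n}(z))\to c_{V_0}$ for any $z\in M$) and the inequalities $V(\e x)\geq V_0$, $g\leq f$, shows $\liminf c_{\e_n}\geq c_{V_0}$, hence $J_{\e_n}(u_n)\to c_{V_0}$.

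Now I can apply Lemma \ref{prop3.3} to the sequence $(u_n)$: there exists $(\tilde{z}_n)\subset\R^N$ such that $v_n(x):=u_n(x+\tilde{z}_n)$ converges in $H^s(\R^N)$ to some $v$, and $z_n:=\e_n\tilde{z}_n\to z_0\in M$. The key computation is then to perform the change of variable $x=y+\tilde{z}_n$ in the definition of $\beta_{\e_n}$, which yields
\begin{equation*}
\beta_{\e_n}(u_n)=\frac{\int_{\R^N}\Upsilon(\e_n y+z_n)\, v_n^2(y)\,dy}{\int_{\R^N}v_n^2(y)\,dy}.
\end{equation*}

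Since $z_0\in M\subset B_\rho(0)$ we have $\Upsilon(z_0)=z_0$, and the pointwise limit $\Upsilon(\e_n y+z_n)\to z_0$ combined with the boundedness of $\Upsilon$ lets me pass to the limit via the dominated convergence theorem (using $v_n\to v$ in $L^2(\R^N)$ and $v\not\equiv 0$). I conclude $\beta_{\e_n}(u_n)\to z_0\in M\subset M_\delta$, so $\mathrm{dist}(\beta_{\e_n}(u_n),M_\delta)\to 0$, contradicting the standing assumption. The only nontrivial step is the energy convergence $J_{\e_n}(u_n)\to c_{V_0}$; once that is secured, Lemma \ref{prop3.3} does all the heavy lifting and the rest is the change of variable together with $\Upsilon(z_0)=z_0$.
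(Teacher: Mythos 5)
Your argument is correct and follows essentially the same route as the paper: both establish $c_{V_0}\leq c_{\e_n}\leq J_{\e_n}(u_n)\leq c_{V_0}+h(\e_n)$ to get $J_{\e_n}(u_n)\rightarrow c_{V_0}$, invoke Lemma \ref{prop3.3} to obtain translations $\tilde{z}_n$ with $z_n=\e_n\tilde{z}_n\rightarrow z_0\in M$, and then pass to the limit in the translated barycenter formula (the paper writes $\beta_{\e_n}(u_n)=z_n+o_n(1)$, which is your dominated convergence step). Your contradiction framing versus the paper's choice of a nearly maximizing sequence is only a cosmetic difference.
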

\begin{proof}
Let $\e_{n}\rightarrow 0$ as $n\rightarrow \infty$. For any $n\in \N$, there exists $u_{n}\in \tilde{\mathcal{N}}_{\e_{n}}$ such that
$$
\sup_{u\in \tilde{\mathcal{N}}_{\e_{n}}} \inf_{z\in M_{\delta}}|\beta_{\e_{n}}(u)-z|=\inf_{z\in M_{\delta}}|\beta_{\e_{n}}(u_{n})-z|+o_{n}(1).
$$
Therefore, it is suffices to prove that there exists $(z_{n})\subset M_{\delta}$ such that 
\begin{equation}\label{3.13}
\lim_{n\rightarrow \infty} |\beta_{\e_{n}}(u_{n})-z_{n}|=0.
\end{equation}
We note that $(u_{n})\subset  \tilde{\mathcal{N}}_{\e_{n}}\subset  \mathcal{N}_{\e_{n}}$, from which we deuce that
$$
c_{V_{0}}\leq c_{\e_{n}}\leq J_{\e_{n}}(u_{n})\leq c_{V_{0}}+h(\e_{n}).
$$
This yields $J_{\e_{n}}(u_{n})\rightarrow c_{V_{0}}$. By using Lemma \ref{prop3.3}, there exists $(\tilde{z}_{n})\subset \R^{N}$ such that $z_{n}=\e_{n}\tilde{z}_{n}\in M_{\delta}$ for $n$ sufficiently large. By setting $v_{n}=u_{n}(\cdot+\tilde{z}_{n})$, we can see that
$$
\beta_{\e_{n}}(u_{n})=z_{n}+\frac{\int_{\R^{N}}[\Upsilon(\e_{n}y+z_{n})-z_{n}] v_{n}^{2} \, dy}{\int_{\R^{N}} v^{2}_{n}\, dy}.
$$
Since $\e_{n} x+z_{n}\rightarrow z_{0}\in M$, we deduce that $\beta_{\e_{n}}(u_{n})=z_{n}+o_{n}(1)$, that is (\ref{3.13}) holds.

\end{proof}

\noindent
At this point, we give the following multiplicity result by using the abstract theory in \cite{SW}. We note that, because of $f$ is not $C^{1}$, we cannot use the methods in \cite{FS}.
\begin{thm}\label{teorema}
Assume that $(V_1)$-$(V_2)$ and $(f_1)$-$(f_4)$ hold. Then, for any $\delta>0$ there exists $\bar{\e}_\delta>0$ such that, for any $\e \in (0, \bar{\e}_\delta)$, problem $(\ref{Pe})$ has at least $cat_{M_{\delta}}(M)$ positive solutions.   
\end{thm}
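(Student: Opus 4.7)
The plan is to combine the Szulkin--Weth abstract framework (already built up in Lemmas \ref{lemz1}, \ref{lemma2.10} and Proposition \ref{propz2}) with a Benci--Cerami type category argument. Fix $\delta>0$ with $M_\delta\subset \Lambda$, set
$$
h(\e):=\sup_{z\in M}|J_\e(\Phi_\e(z))-c_{V_0}|,
$$
which tends to $0$ by Lemma \ref{lemma3.4}, and define the sublevel set $\tilde{\mathbb{S}}_\e^+:=m_\e^{-1}(\tilde{\mathcal{N}}_\e)\subset \mathbb{S}_\e^+$. By the choice of $h$, $\Phi_\e(z)\in \tilde{\mathcal{N}}_\e$ for every $z\in M$, so the map $\alpha_\e:=m_\e^{-1}\circ \Phi_\e\colon M\to \tilde{\mathbb{S}}_\e^+$ is well defined and continuous by Lemma \ref{lemz1}-(c).

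The topological core is to show that the composition $\beta_\e\circ m_\e\circ \alpha_\e=\beta_\e\circ \Phi_\e\colon M\to M_\delta$ is homotopic in $M_\delta$ to the natural inclusion $i_M\colon M\hookrightarrow M_\delta$. By Lemmas \ref{lemma3.5} and \ref{lemma3.7}, for $\e$ small one has $\beta_\e(\Phi_\e(z))\in M_\delta$ and $\sup_{z\in M}|\beta_\e(\Phi_\e(z))-z|<\delta$, so the straight-line homotopy $H(t,z):=(1-t)z+t\,\beta_\e(\Phi_\e(z))$ takes values in $M_\delta$ for all $t\in[0,1]$. A standard pull-back-and-contract argument then gives
$$
cat_{M_\delta}(M)\leq cat_{\tilde{\mathbb{S}}_\e^+}\!\bigl(\alpha_\e(M)\bigr)\leq cat\bigl(\tilde{\mathbb{S}}_\e^+\bigr).
$$

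The conclusion follows from Ljusternik--Schnirelmann category theory applied to $\psi_\e\in C^1(\mathbb{S}_\e^+,\R)$. The manifold $\mathbb{S}_\e^+$ is $C^{1,1}$, and $\psi_\e$ is bounded below on $\tilde{\mathbb{S}}_\e^+$ and satisfies $(PS)_c$ at every level (Lemma \ref{lemma2.10}); Lemma \ref{lemz1}-(d) additionally prevents Palais--Smale sequences from approaching $\partial\mathbb{S}_\e^+$, so the deformation argument stays inside $\mathbb{S}_\e^+$. The abstract category theorem thus yields at least $cat(\tilde{\mathbb{S}}_\e^+)\geq cat_{M_\delta}(M)$ critical points of $\psi_\e$ in $\tilde{\mathbb{S}}_\e^+$. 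By Proposition \ref{propz2}-(d) these correspond to the same number of nontrivial critical points of $J_\e$ in $\tilde{\mathcal{N}}_\e\subset \mathcal{N}_\e$, i.e.\ weak solutions of \eqref{Pe}. Testing the equation against $u_\e^-$ and using that $g(\e x,t)=0$ for $t\leq 0$ forces $u_\e\geq 0$, and the strong maximum principle for $(-\Delta)^s$ then gives $u_\e>0$.

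The principal obstacle is precisely the lack of differentiability of $\mathcal{N}_\e$ caused by $f$ being only continuous, which rules out a direct adaptation of the $C^1$-Nehari arguments of \cite{FS}. This is circumvented by transferring the variational problem to the $C^{1,1}$-sphere $\mathbb{S}_\e^+$ via the homeomorphism $m_\e$ of Lemma \ref{lemz1}-(c): on $\mathbb{S}_\e^+$ the functional $\psi_\e$ is of class $C^1$, its critical points are in bijective correspondence with nontrivial critical points of $J_\e$, and Lemma \ref{lemz1}-(d) supplies the barrier from $\partial \mathbb{S}_\e^+$ that makes Ljusternik--Schnirelmann theory applicable.
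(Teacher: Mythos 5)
Your argument is correct and follows essentially the same route as the paper: the maps $\Phi_\e$, $\alpha_\e=m_\e^{-1}\circ\Phi_\e$ and $\beta_\e$, the straight-line homotopy from Lemma \ref{lemma3.5} giving $cat$ of the image at least $cat_{M_{\delta}}(M)$, and the Szulkin--Weth abstract Ljusternik--Schnirelmann result on $\mathbb{S}_\e^{+}$ (via Lemma \ref{lemma2.10} and Lemma \ref{lemz1}-(d)) transferred back to $J_\e$ on $\tilde{\mathcal{N}}_\e$ through Proposition \ref{propz2}. Your explicit positivity step (testing with $u_\e^{-}$ plus the maximum principle) is a harmless addition the paper leaves implicit.
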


\begin{proof}
For any $\e>0$, we define $\alpha_\e : M \rightarrow \mathbb{S}_{\e}^{+}$ by setting $\alpha_\e(z)= m_{\e}^{-1}(\Phi_{\e}(z))$. By using Lemma \ref{lemma3.4} and Proposition \ref{propz2}-(d), we can see that
\begin{equation*}
\lim_{\e \rightarrow 0} \psi_{\e}(\alpha_{\e}(z)) = \lim_{\e \rightarrow 0} J_{\e}(\Phi_{\e}(z))= c_{V_{0}} \mbox{ uniformly in } z\in M. 
\end{equation*}  
Then, there exists $\bar{\e}>0$ such that $\{ w\in \mathbb{S}_{\e}^{+} : \psi_{\e}(w) \leq c_{V_{0}} + h(\e)\} \neq \emptyset$ for all $\e \in (0, \bar{\e})$. \\
Taking into account Lemma \ref{lemma3.4}, Lemma \ref{lemz1}-(c), Lemma \ref{lemma3.5} and Lemma \ref{lemma3.7}, we can find $\bar{\e}= \bar{\e}_{\delta}>0$ such that the following diagram
\begin{equation*}
M\stackrel{\Phi_{\e}}{\rightarrow} \Phi_{\e}(M) \stackrel{m_{\e}^{-1}}{\rightarrow} \alpha_{\e}(M)\stackrel{m_{\e}}{\rightarrow} \Phi_{\e}(M) \stackrel{\beta_{\e}}{\rightarrow} M_{\delta}
\end{equation*}    
is well defined for any $\e \in (0, \bar{\e})$. \\
By using Lemma \ref{lemma3.5}, there exists a function $\theta(\e, z)$ with $|\theta(\e, z)|<\frac{\delta}{2}$ uniformly in $z\in M$ for all $\e \in (0, \bar{\e})$ such that $\beta_{\e}(\Phi_{\e}(z))= z+ \theta(\e, z)$ for all $z\in M$. Then, we can see that $H(t, z)= z+ (1-t)\theta(\e, z)$ with $(t, z)\in [0,1]\times M$ is a homotopy between $\beta_{\e} \circ \Phi_{\e}$ and the inclusion map $id: M \rightarrow M_{\delta}$, which implies that $cat_{\alpha_{\e}(M)} \alpha_{\e}(M)\geq cat_{M_{\delta}}(M)$. Hence, by using Lemma \ref{lemma2.10} and Corollary $28$ in \cite{SW}, with $c= c_{\e}\leq c_{V_{0}}+h(\e) =d$ and $K= \alpha_{\e}(M)$, we can see that $\Psi_{\e}$ has at least $cat_{\alpha_{\e}(M)} \alpha_{\e}(M)$ critical points on $\{ w\in \mathbb{S}_{\e}^{+} : \psi_{\e}(w) \leq c_{V_{0}} + h(\e)\}$. \\
In view of Proposition \ref{propz2} and $cat_{\alpha_{\e}(M)} \alpha_{\e}(M)\geq cat_{M_{\delta}}(M)$, we can infer that $J_{\e}$ admits at least $cat_{M_{\delta}}(M)$ critical points in $\tilde{\mathcal{N}}_{\e}$.       

\end{proof}

\noindent
Now, we are able to give the proof of our main result.
\begin{proof}
Take $\delta>0$ such that $M_\delta \subset \Lambda$. We begin proving that there exists $\tilde{\e}_{\delta}>0$ such that for any $\e \in (0, \tilde{\e}_{\delta})$ and any solution $u_{\e} \in \tilde{\mathcal{N}}_{\e}$ of \eqref{Pe}, it results 
\begin{equation}\label{infty}
\|u_{\e}\|_{L^{\infty}(\R^{N}\setminus \Lambda_{\e})}<a. 
\end{equation}
Assume by contradiction that there exists $\e_{n}\rightarrow 0$, $u_{\e_{n}}\in \tilde{\mathcal{N}}_{\e_{n}}$ such that $J'_{\e_{n}}(u_{\e_{n}})=0$ and $\|u_{\e_{n}}\|_{L^{\infty}(\R^{N}\setminus \Lambda_{\e_{n}})}\geq a$. 
Since $J_{\e_{n}}(u_{\e_{n}}) \leq c_{V_{0}} + h(\e_{n})$ and $h(\e_{n})\rightarrow 0$, we can argue as in the  first part of the proof of Lemma \ref{prop3.3}, to deduce that $J_{\e_{n}}(u_{\e_{n}})\rightarrow c_{V_{0}}$.
Then, by using Lemma \ref{prop3.3}, we can find $(\tilde{z}_{n})\subset \R^{N}$ such that $\e_{n}\tilde{z}_{n}\rightarrow z_{0} \in M$. \\
Now, if we choose $r>0$ such that $B_{2r}(z_{0})\subset \Lambda$, we can see $B_{\frac{r}{\e_{n}}}(\frac{z_{0}}{\e_{n}})\subset \Lambda_{\e_{n}}$. In particular, for any $z\in B_{\frac{r}{\e_{n}}}(\tilde{z}_{n})$ there holds
\begin{equation*}
\left|z - \frac{z_{0}}{\e_{n}}\right| \leq |z- \tilde{z}_{n}|+ \left|\tilde{z}_{n} - \frac{z_{0}}{\e_{n}}\right|<\frac{2r}{\e_{n}}\, \mbox{ for } n \mbox{ sufficiently large. }
\end{equation*}
Therefore $\R^{N}\setminus \Lambda_{\e_{n}}\subset \R^{N} \setminus B_{\frac{r}{\e_{n}}}(\tilde{z}_{n})$ for any $n$ big enough. By using Lemma $2.6$ in \cite{AM}, there exists $R>0$ such that $w_{n}(x)<a$ for $|x|\geq R$ and $n\in \N$, where $w_{n}(x)=u_{\e_{n}}(x+ \tilde{z}_{n})$. \\
Hence $u_{\e_{n}}(x)<a$ for any $x\in \R^{N}\setminus B_{R}(\tilde{z}_{n})$ and $n\in \N$. As a consequence, there exists $\nu \in \N$ such that for any $n\geq \nu$ and $\frac{r}{\e_{n}}>R$, it holds $\R^{N}\setminus \Lambda_{\e_{n}}\subset \R^{N} \setminus B_{\frac{r}{\e_{n}}}(\tilde{z}_{n})\subset \R^{N}\setminus B_{R}(\tilde{z}_{n})$, which gives $u_{\e_{n}}(x)<a$ for any $x\in \R^{N}\setminus \Lambda_{\e_{n}}$, and this gives a contradiction. \\
Now, let $\bar{\e}_{\delta}$ given in Theorem \ref{teorema} and take $\e_{\delta}= \min \{\tilde{\e}_{\delta}, \bar{\e}_{\delta}\}$. Fix $\e \in (0, \e_{\delta})$. By Theorem \ref{teorema}, we know that problem \eqref{Pe} admits $cat_{M_{\delta}}(M)$ nontrivial solutions $u_{\e}$. Since $u_{\e}\in \tilde{\mathcal{N}}_{\e}$ satisfies \eqref{infty}, from the definition of $g$ it follows that $u_{\e}$ is a solution of \eqref{R}. \\
Taking $v_{\e}(x)=u_{\e}(x/\e)$, we can infer that $v_{\e}$ is a solution to (\ref{P}). Finally, we prove that if $x_\e$ is a global maximum point of $v_\e(x)$, then $V(x_\e)\rightarrow V_{0}$ as $\e\rightarrow 0$. \\
Firstly, we note that there exists $\tau_{0}>0$ such that $v_{\e}(x_{\e})\geq \tau_{0}$ for all $\e\in (0, \e_{\delta})$. \\
Then, if $y_{\e}=\frac{x_{\e}-\e \tilde{z}_{\e}}{\e}$, we can see that $y_{\e}$ is a global maximum point of $w_{\e}=u_{\e}(\cdot+\tilde{z}_{\e})$ and it results that $w_{\e}(y_{\e})\geq \tau_{0}$ for all $\e\in (0, \e_{\delta})$.
Now, we argue by contradiction, and we assume that there exists a sequence $\e_{n}\rightarrow 0$ and $\gamma>0$ such that 
\begin{equation}\label{terabs}
V(x_{\e_{n}})\geq V_{0}+\gamma \mbox{ for all } n\in \N.
\end{equation}
By using Lemma $2.6$ in \cite{AM}, we know that $w_{\e_{n}}\rightarrow 0$ as $|x|\rightarrow \infty$, and uniformly in $n\in \N$. \\
Thus $(y_{\e_{n}})$ is a bounded sequence. Moreover, up to a subsequence, we know that there exists $z_{0}\in M$ such that $V(z_{0})=V_{0}$ and $\e_{n} \tilde{z}_{\e_{n}}\rightarrow z_{0}$. 
Hence, $x_{\e_{n}}=\e_{n}\tilde{z}_{\e_{n}}+\e_{n}y_{\e_{n}}\rightarrow z_{0}$ and, by the continuity of $V$, we deduce that $V(x_{\e_{n}})\rightarrow V_{0}$ as $n\rightarrow \infty$, which contradicts (\ref{terabs}).         
This concludes the proof of Theorem \ref{thmf}.

\end{proof}

\section{Multiple solutions for the supercritical problem (\ref{Pcritical})}

In this section we deal with the existence of multiple solutions for (\ref{Pcritical}).
To study this problem we follow the approaches developed in \cite{CY, FF, Rab}.\\
Firstly we truncate the nonlinearity $f(u)=|u|^{q-2}u+\lambda |u|^{r-2}u$ as follows.

Let $K>0$ be a real number, whose value will be fixed later, and we set
$$
f_{\lambda}(t):=
\begin{cases}
0 & \text{ if $t<0$} \\
t^{q-1}+\lambda t^{r-1}& \text{ if $0\leq t<K$} \\
(1+\lambda K^{r-q})t^{q-1}   & \text{ if $t \geq K$}.
\end{cases}
$$
Then, it is clear that $f_{\lambda}$ satisfies the assumptions $(f_1)$-$(f_4)$ ($(f_3)$ holds with $\theta=q>2$).

Moreover
\begin{equation}\label{fk}
f_{\lambda}(t)\leq (1+\lambda K^{r-q})t^{q-1} \mbox{ for all } t\geq 0.
\end{equation}
Therefore, we can consider the following truncated problem
\begin{equation}\label{TR}
(-\Delta)^{s} u + V(\e x)u =  f_{\lambda}(u) \mbox{ in } \R^{N}. 
\end{equation}
It is easy to see that weak solutions of (\ref{TR}) are critical points of the functional $I_{\e, \lambda}: H^{s}_{\e}\rightarrow \R$ defined by
$$
I_{\e, \lambda}(u)=\frac{1}{2} \|u\|_{H^{s}_{\e}}^{2}-\int_{\R^{N}} F_{\lambda}(u)\, dx.
$$
We also consider the autonomous functional 
$$
I_{0, \lambda}(u)=\frac{1}{2} \|u\|_{H^{s}_{0}}^{2}-\int_{\R^{N}} F_{\lambda}(u)\, dx.
$$
By using Theorem \ref{thmf}, we know that for any $\lambda\geq 0$ and $\delta>0$ there exists $\bar{\e}(\delta, \lambda)>0$ such that, for any  $\e\in (0, \bar{\e}(\delta, \lambda))$, problem (\ref{TR}) admits at least $cat_{M_{\delta}}(M)$ positive solutions $u_{\e, \lambda}$.\\
Now, we prove that it is possible to estimate the $H^{s}_{\e}$-norm of these solutions, uniformly in $\lambda$ and $\e$ sufficiently small.
\begin{lem}\label{Fig1}
There exists $\bar{C}>0$ such that $\|u_{\e, \lambda}\|_{H^{s}_{\e}}\leq \bar{C}$ for any $\e>0$ sufficiently small and uniformly in $\lambda$.
\end{lem}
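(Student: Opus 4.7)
The plan is to combine two ingredients: an Ambrosetti--Rabinowitz lower bound for the energy at any critical point of $I_{\e,\lambda}$, and a $\lambda$-independent upper ceiling for the critical values produced by Theorem \ref{thmf}. Chaining the two forces the norm bound.

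As observed in the paper, $f_{\lambda}$ satisfies $(f_{3})$ with $\theta=q>2$, that is $qF_{\lambda}(t)\leq f_{\lambda}(t)\,t$ for every $t\geq 0$, uniformly in $\lambda$. Since $u_{\e,\lambda}$ solves \eqref{TR} one has $\langle I'_{\e,\lambda}(u_{\e,\lambda}),u_{\e,\lambda}\rangle=0$, so computing
$$
I_{\e,\lambda}(u_{\e,\lambda})-\tfrac{1}{q}\langle I'_{\e,\lambda}(u_{\e,\lambda}),u_{\e,\lambda}\rangle=\Bigl(\tfrac{1}{2}-\tfrac{1}{q}\Bigr)\|u_{\e,\lambda}\|^{2}_{H^{s}_{\e}}+\int_{\R^{N}}\Bigl(\tfrac{1}{q}f_{\lambda}(u_{\e,\lambda})u_{\e,\lambda}-F_{\lambda}(u_{\e,\lambda})\Bigr)\,dx
$$
and dropping the nonnegative integrand gives the lower bound $(\tfrac{1}{2}-\tfrac{1}{q})\|u_{\e,\lambda}\|^{2}_{H^{s}_{\e}}\leq I_{\e,\lambda}(u_{\e,\lambda})$. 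For the upper bound, recall that the solutions produced by Theorem \ref{thmf} lie in $\tilde{\mathcal{N}}_{\e,\lambda}$, so $I_{\e,\lambda}(u_{\e,\lambda})\leq c_{V_{0},\lambda}+h_{\lambda}(\e)$. The crucial pointwise inequality $f_{\lambda}(t)\geq (t_{+})^{q-1}$ holds for every $\lambda\geq 0$ (check it separately on $[0,K]$ and on $[K,\infty)$), hence $F_{\lambda}(t)\geq (t_{+})^{q}/q$. Denote by $I_{0,0}$ the autonomous functional associated with the pure power $f_{0}(t)=(t_{+})^{q-1}$ and fix a ground state $w_{0}$ of $I_{0,0}$. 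I would then rerun the category construction of Section~3 with the $\lambda$-free test function $\Psi_{\e,z}(x)=\eta(|\e x-z|)\,w_{0}\bigl((\e x-z)/\e\bigr)$, and let $t_{\e,\lambda}>0$ denote its Nehari projection onto $\mathcal{N}_{\e,\lambda}$. The Nehari identity $t_{\e,\lambda}^{2}\|\Psi_{\e,z}\|^{2}_{H^{s}_{\e}}=\int_{\R^{N}}f_{\lambda}(t_{\e,\lambda}\Psi_{\e,z})\,t_{\e,\lambda}\Psi_{\e,z}\,dx\geq t_{\e,\lambda}^{q}\int_{\R^{N}}\Psi_{\e,z}^{q}\,dx$, together with $\|\Psi_{\e,z}\|^{2}_{H^{s}_{\e}}\to\|w_{0}\|^{2}_{H^{s}_{0}}$ by dominated convergence (as in Lemma \ref{lemma3.4}), forces $t_{\e,\lambda}\leq C$ uniformly in $\lambda$ and in $\e$ small. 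Therefore
$$
c_{V_{0},\lambda}+h_{\lambda}(\e)\leq\max_{t\geq 0}I_{\e,\lambda}(t\Psi_{\e,z})=I_{\e,\lambda}(t_{\e,\lambda}\Psi_{\e,z})\leq\tfrac{1}{2}t_{\e,\lambda}^{2}\|\Psi_{\e,z}\|^{2}_{H^{s}_{\e}}\leq C',
$$
with $C'$ independent of $\lambda$. Combining with the AR lower bound yields $\|u_{\e,\lambda}\|^{2}_{H^{s}_{\e}}\leq 2qC'/(q-2)=:\bar{C}^{2}$.

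The principal obstacle is precisely this $\lambda$-uniformity. A naive application of Theorem \ref{thmf} for each fixed $\lambda$ would build the test function from the ground state $w_{\lambda}$ of $I_{0,\lambda}$, which a priori could depend badly on $\lambda$ and spoil a uniform ceiling; likewise, both $c_{V_{0},\lambda}$ and $h_{\lambda}(\e)$ are then defined through $w_{\lambda}$. The workaround is to replace $w_{\lambda}$ by the $\lambda$-free ground state $w_{0}$ of the pure-power problem and to exploit the pointwise monotonicity $F_{\lambda}\geq F_{0}$ so that the Nehari projection, the mountain-pass level, and the quantity $h_{\lambda}(\e)$ can all be bounded by constants that do not see $\lambda$.
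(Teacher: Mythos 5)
Your overall skeleton coincides with the paper's proof: the Ambrosetti--Rabinowitz identity with $\theta=q$ gives the lower bound $\left(\tfrac{1}{2}-\tfrac{1}{q}\right)\|u_{\e,\lambda}\|^{2}_{H^{s}_{\e}}\leq I_{\e,\lambda}(u_{\e,\lambda})$, and your pointwise comparison $f_{\lambda}\geq f_{0}$, $F_{\lambda}\geq F_{0}$, which yields $c_{V_{0},\lambda}\leq \max_{t>0}I_{0,\lambda}(tw_{0})\leq \max_{t>0}I_{0,0}(tw_{0})=c_{V_{0},0}$, is exactly the inequality $c_{V_{0},\lambda}\leq c_{V_{0},0}$ that the paper invokes (and which you, unlike the paper, actually justify). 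The uniform bound on the Nehari projection $t_{\e,\lambda}$ of the $w_{0}$-based test function is also fine.

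The gap is in your handling of $h_{\lambda}(\e)$. The solutions furnished by Theorem \ref{thmf} applied to \eqref{TR} satisfy $I_{\e,\lambda}(u_{\e,\lambda})\leq c_{V_{0},\lambda}+h_{\lambda}(\e)$, where $h_{\lambda}(\e)$ is built (as in Lemma \ref{lemma3.4}) from the ground state $w_{\lambda}$ of the truncated autonomous functional $I_{0,\lambda}$; for this $h_{\lambda}$ your step $c_{V_{0},\lambda}+h_{\lambda}(\e)\leq \max_{t\geq 0}I_{\e,\lambda}(t\Psi_{\e,z})$ with $\Psi_{\e,z}$ built on $w_{0}$ is not justified, since the two sides are maxima of $I_{\e,\lambda}$ along different rays and carry no ordering. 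If instead you literally rerun the Section 3 construction with the $\lambda$-free test function, then $I_{\e,\lambda}(t_{\e,\lambda}\Psi_{\e,z})\rightarrow \max_{t>0}I_{0,\lambda}(tw_{0})$, which is in general strictly larger than $c_{V_{0},\lambda}$ because $w_{0}$ is not a ground state of $I_{0,\lambda}$; hence the redefined $h_{\lambda}(\e)$ does not tend to $0$, and the parts of the Section 3 machinery that require levels converging to the autonomous mountain pass value (Lemma \ref{prop3.3}, Lemma \ref{lemma3.7}, and the $L^{\infty}$ estimate outside $\Lambda_{\e}$ in the proof of Theorem \ref{thmf}) break down. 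The repair is much simpler than your workaround: the statement of the lemma (and of Theorem \ref{thmf2}) allows the smallness threshold for $\e$ to depend on $\lambda$, so you keep the $w_{\lambda}$-based construction, use $h_{\lambda}(\e)\rightarrow 0$ for each fixed $\lambda$ to ensure $h_{\lambda}(\e)\leq 1$ for $\e<\bar{\e}(\delta,\lambda)$, and conclude $I_{\e,\lambda}(u_{\e,\lambda})\leq c_{V_{0},\lambda}+1\leq c_{V_{0},0}+1$; combined with your AR lower bound this gives the $\lambda$-uniform constant $\bar{C}$. In short, your comparison with the pure power problem is the right way to prove $c_{V_{0},\lambda}\leq c_{V_{0},0}$, but the $\lambda$-uniform control of $h_{\lambda}(\e)$ is neither needed nor obtainable by the route you propose.
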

\begin{proof}
From the proof of Theorem \ref{thmf}, we know that any solution $u_{\e, \lambda}$ of (\ref{TR}), satisfies the following inequality
$$
I_{\e, \lambda}(u_{\e, \lambda})\leq c_{V_{0}, \lambda}+h_{\lambda}(\e)
$$
where $c_{V_{0}, \lambda}$ is the mountain pass level related to the functional $I_{\e, \lambda}$, and $h_{\lambda}(\e)\rightarrow 0$ as $\e\rightarrow 0$.
Then, decreasing $\bar{\e}(\delta, \lambda)$ if necessary, we can suppose that 
\begin{equation}
I_{\e, \lambda}(u_{\e, \lambda})\leq c_{V_{0}, \lambda}+1
\end{equation}
for any $\e\in (0, \bar{\e}(\delta, \lambda))$. By using the fact that $c_{V_{0}, \lambda}\leq c_{V_{0}, 0}$
for any $\lambda\geq 0$, we can deduce that 
\begin{equation}\label{F1}
I_{\e, \lambda}(u_{\e, \lambda})\leq c_{V_{0}, 0}+1
\end{equation}
for any $\e\in (0, \bar{\e}(\delta, \lambda))$.

We can also note that 
\begin{align}\label{F2}
I_{\e, \lambda}(u_{\e, \lambda})&=I_{\e, \lambda}(u_{\e, \lambda})-\frac{1}{\theta}\langle I'_{\e, \lambda}(u_{\e, \lambda})\rangle \nonumber\\
&=\left(\frac{1}{2}-\frac{1}{\theta}\right)\|u_{\e, \lambda}\|^{2}_{H^{s}_{\e}}+\int_{\R^{N}} \frac{1}{\theta} f_{\lambda}(u_{\e, \lambda})u_{\e, \lambda}-F_{\lambda}(u_{\e, \lambda})\, dx \nonumber\\
&\geq  \left(\frac{1}{2}-\frac{1}{\theta}\right)\|u_{\e, \lambda}\|^{2}_{H^{s}_{\e}}
\end{align}
where in the last inequality we have used the assumption $(f_3)$.
Putting together (\ref{F1}) and (\ref{F2}), we can infer that 
$$
\|u_{\e, \lambda}\|_{H^{s}_{\e}}\leq \left[\left(\frac{2\theta}{\theta-2} \right)(c_{V_{0}, 0}+1)\right]^{\frac{1}{2}}
$$
for any $\e\in (0, \bar{\e}(\delta, \lambda))$.

\end{proof}

\noindent
Now, our claim is to prove that $u_{\e, \lambda}$ is a solution of the original problem (\ref{Pcritical}). To do this, we will show that we can find $K_{0}>0$ such that for any $K\geq K_{0}$, there exists $\lambda_{0}=\lambda_{0}(K)>0$ such that 
\begin{equation}
\|u_{\e, \lambda}\|_{L^{\infty}(\R^{N})}\leq K \mbox{ for all }  \lambda\in [0, \lambda_{0}].
\end{equation}
In order to achieve our purpose, we adapt the Moser iteration technique \cite{Moser} (see also \cite{CY, FF, Rab}) to the solutions $v_{\e, \lambda}(x, y)$ of the extended problem \cite{CS}
\begin{equation}\label{TRE}
\left\{
\begin{array}{ll}
\dive(y^{1-2s} \nabla v_{\e, \lambda})=0 & \mbox{ in } \R^{N+1}_{+} \\
\frac{\partial v_{\e, \lambda}}{\partial \nu^{1-2s}}=\kappa_{s} [-V(\e x) v_{\e, \lambda}(x, 0)+f_{\lambda}(v_{\e, \lambda}(x, 0))]  & \mbox{ on } \partial\R^{N+1}_{+} \\
\end{array},
\right.
\end{equation}
related to (\ref{TR}).
We recall that $v_{\e, \lambda}$ is a weak solution to (\ref{TRE}), if $v_{\e, \lambda}$ satisfies the following identity
\begin{equation}\label{critpoint}
\iint_{\R^{N+1}_{+}} y^{1-2s} \nabla v_{\e, \lambda} \nabla \varphi \, dx dy+ \int_{\R^{N}}V(\e x) v_{\e, \lambda}(x, 0) \varphi(x, 0) \, dx=\kappa_{s}\int_{\R^{N}}  f_{\lambda}(v_{\e, \lambda}(x, 0))\varphi(x, 0) \, dx
\end{equation}
for any $\varphi\in X^{s}$. Here $X^{s}$ is the completion of $C_{c}^{\infty}(\overline{\R^{N+1}_{+}})$ with respect to the norm
$$
\iint_{\R^{N+1}_{+}} y^{1-2s} |\nabla v|^{2} \, dx dy+ \int_{\R^{N}}V(\e x) v^{2}(x, 0) \, dx.
$$
For simplicity, we will assume that $\kappa_{s}=1$, and we set $v:=v_{\e, \lambda}$.
For any $L>0$, we define $v_{L}:=\min\{v, L\}\geq 0$, where $\beta>1$ will be chosen later, and let $w_{L}=v v_{L}^{\beta-1}$. 

Taking $\varphi:=v_{L}^{2(\beta-1)}v$ in (\ref{critpoint}), we can see that
\begin{align}\label{conto1}
\iint_{\R^{N+1}_{+}} &y^{1-2s}v^{2(\beta-1)}_{L} |\nabla v|^{2} \, dxdy +\iint_{\{v<L\}} 2(\beta-1) y^{1-2s} v^{2(\beta-1)}_{L} |\nabla v|^{2} \, dx dy   \nonumber \\
&=\int_{\R^{N}} f_{\lambda}(v(x, 0)) v_{L}^{2(\beta-1)}(x, 0) v(x, 0)  \,dx-\int_{\R^{N}} V(\e x) v^{2}(x, 0) v_{L}^{2(\beta-1)}(x, 0) \, dx.
\end{align}
Putting together (\ref{conto1}), (\ref{fk}) and $(V_1)$, we get
\begin{align}\label{conto2}
&\iint_{\R^{N+1}_{+}} y^{1-2s}v^{2(\beta-1)}_{L} |\nabla v|^{2} \, dxdy\leq C_{\lambda, K} \int_{\R^{N}} v^{q}(x, 0) v_{L}^{2(\beta-1)}(x, 0) \, dx
\end{align}
where $C_{\lambda, K}=1+\lambda K^{r-q}$.
On the other hand, from Theorem \ref{Sembedding} and $\beta>1$, we have
\begin{align}\label{conto3}\begin{split}
\|w_{L}(\cdot, 0)\|_{L^{2^{*}_{s}}(\R^{N})}^{2}&\leq S_{*} \iint_{\R^{N+1}_{+}} y^{1-2s} |\nabla w_{L}|^{2}\, dx dy \\
&=S_{*}\iint_{\R^{N+1}_{+}} y^{1-2s} |v_{L}^{\beta-1}\nabla v+(\beta-1) v v_{L}^{\beta-2} \nabla v_{L}|^{2}\, dx dy \\
&\leq 2S_{*} \left(\iint_{\R^{N+1}_{+}} y^{1-2s} (\beta-1)^{2}v_{L}^{2(\beta-1)}|\nabla v|^{2} \, dx dy+\iint_{\R^{N+1}_{+}} y^{1-2s} |v_{L}^{\beta-1} \nabla v_{L}|^{2}\, dx dy   \right)\\
&\leq 2 S_{*} ((\beta-1)^{2}+1) \iint_{\R^{N+1}_{+}} y^{1-2s} v_{L}^{2(\beta-1)}|\nabla v|^{2}\, dx dy \\
&=2 S_{*} \beta^{2} \left[\left(\frac{\beta-1}{\beta}\right)^{2}+\frac{1}{\beta}^{2}\right] \iint_{\R^{N+1}_{+}} y^{1-2s} v_{L}^{2(\beta-1)}|\nabla v|^{2}\, dx dy \\
&\leq 4 S_{*} \beta^{2} \iint_{\R^{N+1}_{+}} y^{1-2s} v_{L}^{2(\beta-1)}|\nabla v|^{2}\, dx dy.
\end{split}\end{align}
Taking into account (\ref{conto2}) and (\ref{conto3}), and by using H\"older inequality, we deduce that
\begin{align}\label{conto4}
\|w_{L}(\cdot, 0)\|_{L^{2^{*}_{s}}(\R^{N})}^{2}\leq C_{1} \beta^{2} C_{\lambda, K} \left(\int_{\R^{N}} v^{2^{*}_{s}}(x, 0)\, dx\right)^{\frac{q-2}{2^{*}_{s}}} \left(\int_{\R^{N}} w_{L}^{\frac{2 2^{*}_{s}}{2^{*}_{s}-(q-2)}}(x, 0) \, dx\right)^{\frac{2^{*}_{s}-(q-2)}{2^{*}_{s}}}
\end{align}
where  $2<\frac{2 2^{*}_{s}}{2^{*}_{s}-(q-2)}<2^{*}_{s}$ and $C_{1}>0$.
In view of Lemma \ref{Fig1}, we can see that
\begin{align}\label{conto4}
\|w_{L}(\cdot, 0)\|_{L^{2^{*}_{s}}(\R^{N})}^{2}\leq C_{2} \beta^{2} C_{\lambda, K} \bar{C}^{\frac{q-2}{2^{*}_{s}}} \|w_{L}(\cdot, 0)\|_{L^{\alpha^{*}}(\R^{N})}^{2} 
\end{align}
where 
$$
\alpha^{*}=\frac{2 2^{*}_{s}}{2^{*}_{s}-(q-2)}.
$$ 
Now, we observe that if $v^{\beta}\in L^{\alpha^{*}}(\R^{N})$, from the definition of $w_{L}$, $v_{L}\leq v$, and  (\ref{conto4}), we obtain
\begin{align}\label{conto5}
\|w_{L}(\cdot, 0)\|_{L^{2^{*}_{s}}(\R^{N})}^{2}\leq C_{3} \beta^{2} C_{\lambda, K} \bar{C}^{\frac{q-2}{2^{*}_{s}}} \left(\int_{\R^{N}} v^{\beta \alpha^{*}}(x, 0)\, dx\right)^{\frac{2}{\alpha^{*}}}<\infty.
\end{align}
By passing to the limit in (\ref{conto5}) as $L \rightarrow +\infty$, the Fatou's Lemma yields
\begin{align}\label{conto6}
\|v(\cdot, 0)\|_{L^{\beta 2^{*}_{s}}(\R^{N})}\leq (C_{4} C_{\lambda, K})^{\frac{1}{2\beta}} \beta^{\frac{1}{\beta}} \|v(\cdot, 0)\|_{L^{\beta \alpha^{*}}(\R^{N})}
\end{align}
whenever $v^{\beta \alpha^{*}}\in L^{1}(\R^{N})$.\\
Now, we set $\beta:=\frac{2^{*}_{s}}{\alpha^{*}}>1$, and we observe that, being $v\in L^{2^{*}_{s}}(\R^{N})$, the above inequality holds for this choice of $\beta$. Then, by using the fact that $\beta^{2}\alpha^{*}=\beta 2^{*}_{s}$, it follows that \eqref{conto6} holds with $\beta$ replaced by $\beta^{2}$.
Therefore, we can see that
\begin{align*}
\|v(\cdot, 0)\|_{L^{\beta^{2} 2^{*}_{s}}(\R^{N})}\leq (C_{4} C_{\lambda, K})^{\frac{1}{2\beta^2}} \beta^{\frac{2}{\beta^{2}}}  \|v(\cdot, 0)\|_{L^{\beta^{2} \alpha^{*}}(\R^{N})}\leq  (C_{4} C_{\lambda, K})^{\frac{1}{2}\left(\frac{1}{\beta}+\frac{1}{\beta^{2}}\right)} \beta^{\frac{1}{\beta^{2}}+\frac{2}{\beta^{2}}} \|v(\cdot, 0)\|_{L^{\beta \alpha^{*}}(\R^{N})}.
\end{align*}
Iterating this process, and recalling that $\beta \alpha^{*}:=2^{*}_{s}$, we can infer that for every $m\in \N$
\begin{align}\label{conto7}
 \|v(\cdot, 0)\|_{L^{\beta^{m} 2^{*}_{s}}(\R^{N})} \leq (C_{4} C_{\lambda, K})^{\sum_{j=1}^{m}\frac{1}{2\beta^{j}}} \beta^{\sum_{j=1}^{m} j\beta^{-j}} \|v(\cdot, 0)\|_{L^{2^{*}_{s}}(\R^{N})}.
\end{align}
Taking the limit in (\ref{conto7}) as $m \rightarrow +\infty$ and by using Lemma \ref{Fig1}, we get
\begin{align}\label{conto9}
\|v(\cdot, 0)\|_{L^{\infty}(\R^{N})}\leq (C_{5} C_{\lambda, K})^{\gamma_{1}} \beta^{\gamma_{2}}
\end{align}
where $C_{5}$ depends on $\bar{C}$ and $S_{*}$ (see Theorem \ref{Sembedding}), and
$$
\gamma_{1}:=\frac{1}{2}\sum_{j=1}^{\infty}\frac{1}{\beta^{j}}<\infty \quad \mbox{ and } \quad \gamma_{2}:=\sum_{j=1}^{\infty}\frac{j}{\beta^{j}}<\infty.
$$
Next, we will find some suitable value of $K$ and $\lambda$ such that the following inequality holds
$$
(C_{5} C_{\lambda, K})^{\gamma_{1}} \beta^{\gamma_{2}}\leq K,
$$
or equivalently
$$
1+\lambda K^{r-q}\leq C_{5}^{-1} \beta^{-\frac{\gamma_{2}}{\gamma_{1}}} K^{\frac{1}{\gamma_{1}}}.
$$
Take $K>0$ such that 
$$
\frac{K^{\frac{1}{\gamma_{1}}}}{C_{5}\beta^{\frac{\gamma_{2}}{\gamma_{1}}}}-1>0,
$$
and fix $\lambda_{0}>0$ such that 
$$
\lambda\leq \lambda_{0}\leq \left[\frac{K^{\frac{1}{\gamma_{1}}}}{C_{5}\beta^{\frac{\gamma_{2}}{\gamma_{1}}}}-1\right] \frac{1}{K^{r-q}}.
$$
Therefore, by using (\ref{conto9}), we can infer that 
$$
\|u_{\e, \lambda}\|_{L^{\infty}(\R^{N})}\leq K \mbox{ for all }  \lambda\in [0, \lambda_{0}],
$$
that is $u_{\e, \lambda}$ is a solution of (\ref{Pcritical}). This ends the proof of Theorem \ref{thmf2}


\end{document}